\documentclass[11pt]{amsart}
\usepackage[T1]{fontenc}
\usepackage{lmodern}
\usepackage{tikz}
\usepackage{float}
\usetikzlibrary{patterns,arrows.meta}
\usepackage{amsmath,amssymb,amsthm,mathtools}
\usepackage[margin=1.03in]{geometry}
\usepackage{microtype}
\usepackage{xcolor}
\usepackage[colorlinks=true,linkcolor=blue,citecolor=blue,urlcolor=blue]{hyperref}
\mathtoolsset{showonlyrefs}

\newtheorem{theorem}{Theorem}[section]
\newtheorem{proposition}[theorem]{Proposition}
\newtheorem{lemma}[theorem]{Lemma}

\theoremstyle{definition}
\newtheorem{definition}[theorem]{Definition}
\theoremstyle{remark}
\newtheorem{remark}[theorem]{Remark}
\theoremstyle{plain}

\newcommand{\R}{\mathbb R}
\newcommand{\Sph}{\mathbb S}

\newcommand{\supp}{\operatorname{supp}}
\newcommand{\conv}{\operatorname{conv}}
\newcommand{\abs}[1]{\lvert #1\rvert}
\newcommand{\norm}[1]{\lVert #1\rVert}
\newcommand{\ip}[2]{\langle #1,#2\rangle}
\newcommand{\1}{\mathbf 1}
\newcommand{\gauss}{\gamma_n}
\newcommand{\calC}{\mathcal C}

\title[Isomorphic Busemann--Petty for arbitrary measures]
{Isomorphic Busemann--Petty for arbitrary measures: the sharp order}
\author{Alexander Koldobsky}
\author{Artem Zvavitch}
\thanks{A.K. is supported in part by the U.S. National Science Foundation
Grant DMS-2450745; A.Z. is supported in part by U.S. National Science Foundation
Grant DMS-2247771 and by the Miller Foundation at the University of Missouri.}

\subjclass[2020]{Primary: 52A20, 52A40;
Secondary: 52A23, 60D05}
\keywords{Busemann--Petty problem, convex bodies, arbitrary measures,
hyperplane sections, slicing inequalities, Gaussian mixtures, random
polytopes, random rounding}
\date{{July 31, 2026}}

\begin{document}
\begin{abstract}\begingroup
Let $\calC_n$ be the optimal constant with the following property. For every
even, continuous, strictly positive density $f$ on $\R^n$ and all
origin-symmetric convex bodies $K,L\subset\R^n$, the inequalities
\[
   \int_{K\cap\xi^\perp}f
   \leq
   \int_{L\cap\xi^\perp}f
   \qquad\text{for all }\xi\in\Sph^{n-1}
\]
imply $\int_Kf\leq\calC_n\int_Lf$. It was proved in \cite{KZ15} that
$\calC_n\leq\sqrt n$. In this paper, we prove the matching lower bound
$\calC_n\geq c\sqrt n$. To simplify the exposition, we first give a complete
one-scale construction, based on earlier work of Klartag and Koldobsky, which yields
$\calC_n\geq c\sqrt{n/\log n}$. For the sharp result, we use the
random-rounding construction of Klartag and Livshyts as a black box and
combine it with a spherical-averaging support-separation argument.
\endgroup
\end{abstract}

\maketitle

\section{Introduction and main results}
\label{sec:intro}

Throughout, a convex body is a compact convex set with nonempty interior.
For $x,y\in\R^m$, $\abs{x}$ denotes the Euclidean norm of $x$, and
$\langle x,y\rangle$ denotes the Euclidean inner product. For a measurable
set $A$, $\abs{A}$ denotes its Lebesgue measure in the dimension determined
by the context. We write $B_2^m$ for the Euclidean unit ball in $\R^m$ and
$\Sph^{m-1}=\partial B_2^m$ for the unit sphere. The vectors
$e_1,\ldots,e_m$ form the standard orthonormal basis of $\R^m$.
The symbol $\sigma_k$ denotes the $k$-dimensional surface measure on
$\Sph^k$, while $\supp f$ denotes the closure of
$\{x:f(x)\neq0\}$. All unnamed constants $c,c_0,c_1,\ldots$ and
$C,C_0,C_1,\ldots$ are positive and absolute; their values may change
from line to line.

If $f$ is a continuous nonnegative function on $\R^n$, write
\[
   \mu_f(A)=\int_A f(x)\,dx
\]
for the measure in $\R^n$ with density $f.$  For a hyperplane $H$, the notation
$\mu_f(A\cap H)$ means
\[
   \mu_f(A\cap H)=\int_{A\cap H} f(x)\,d\lambda_H(x),
\]
where $\lambda_H$ is $(n-1)$-dimensional Lebesgue measure on $H$.  Thus the
same ambient density is restricted to the hyperplane; no conditional
normalization is made.

The starting point is the classical Busemann--Petty problem, posed by
Busemann and Petty \cite[Problem~1]{BP56}.  It asks whether, for
origin-symmetric convex bodies $K,L\subset\R^n$, the inequalities
\[
   \abs{K\cap\xi^\perp}
   \leq
   \abs{L\cap\xi^\perp}
   \qquad\text{for all }\xi\in\Sph^{n-1}
\]
necessarily imply $\abs K\leq\abs L$.  Here
$
   \xi^\perp=\{x\in\R^n:\ip{x}{\xi}=0\}
$ 
is the hyperplane through the origin orthogonal to $\xi$.  The answer is
affirmative for
$n\leq4$ and negative for $n\geq5$; see
\cite{GKS99,Zhang99,Gardner06,Koldobsky05} and the references therein.

The second-named author extended the problem from Lebesgue measure to
essentially arbitrary measures \cite{Zvavitch05}.  In the symmetric setting
relevant here, one asks whether
$
   \mu_f(K\cap\xi^\perp)
   \leq
   \mu_f(L\cap\xi^\perp)$
  for all $ \xi\in\Sph^{n-1}
$
implies $\mu_f(K)\leq\mu_f(L)$, where the same density is used for the
full-dimensional measures and for the hyperplane sections.  For even,
continuous, strictly positive densities, the dimensional dichotomy is again affirmative for $n\leq4$ and negative for $n\geq5$; see
\cite[Corollary~2]{Zvavitch05}.

Since the exact implication fails in dimensions $n\geq5$, the authors
introduced its isomorphic version in
\cite[Section~1]{KZ15} as follows.

\begin{definition}\label{def:Cn}
For $n\geq2$, let $\calC_n$ be the infimum of all $C>0$ such that the
following assertion holds.  Whenever $f:\R^n\to(0,\infty)$ is even and
continuous and
$K,L\subset\R^n$ are origin-symmetric convex bodies satisfying
\begin{equation}\label{eq:section-comparison}
   \mu_f(K\cap\xi^\perp)
   \leq
   \mu_f(L\cap\xi^\perp)
   \qquad\text{for all }\xi\in\Sph^{n-1},
\end{equation}
then $\mu_f(K)\leq C\mu_f(L).$
\end{definition}
In an earlier paper \cite[Theorem~1, pp.~264--266]{KZ15}, the authors proved that
\begin{equation}\label{eq:known-upper}
       \calC_n\leq\sqrt n.
\end{equation}
The result there is stated for
even continuous densities.  Requiring strict positivity does not affect the
upper bound.

Related isomorphic comparison theorems in which the two section
integrals are formed with different measures, together with applications to
slicing and distance inequalities, were obtained by Koldobsky, Paouris, and
Zvavitch \cite{KPZ22}. Extensions to the case where the measures for sections
and for the whole body are different were suggested in \cite{Zvavitch05, GHS26}.

For Lebesgue measure, the isomorphic Busemann--Petty problem is directly
connected with Bourgain's slicing problem.  The formulation through central
hyperplane sections appears in
\cite[Section~2, especially Lemma~2 and the following remark]{Bourgain86},
while the comparison relevant here is due to Milman and Pajor
\cite[Section~5]{MP89}.  In particular, the isomorphic Busemann--Petty
constant for volume is controlled, up to an absolute factor, by the maximal
isotropic constant in the corresponding dimension.  The assertion that the
isotropic constants of convex bodies are uniformly bounded in all dimensions
is an equivalent formulation of Bourgain's slicing problem; see
\cite[Section~5]{MP89} and \cite[Section~1]{KL25}.  Following Chen's
breakthrough \cite[Theorem~1]{Chen21}, the polylogarithmic estimate of
Klartag and Lehec \cite[Theorem~1.1]{KL22}, and Guan's bound
\cite[Theorem~1.1]{Guan24}, Klartag and Lehec established this
dimension-free bound in \cite[Theorem~1.2]{KL25};  see also
\cite{Be26} for an alternative proof. Consequently, the
isomorphic Busemann--Petty constant for volume is bounded by an absolute
constant.

The related one-body slicing inequality for arbitrary measures
has a different dimensional behavior.  Koldobsky proved its general
$O(\sqrt n)$ upper bound \cite[Theorem~1]{Koldobsky14}; see also
\cite{Koldobsky15} for estimates in terms of the outer volume ratio and a different proof
of the $\sqrt{n}$ estimate in \cite{CGL}.  
Klartag and Koldobsky constructed examples yielding the lower bound
$
   c\sqrt{n/\log\log n}
$ 
\cite[Theorems~1.1 and~1.3]{KK18}.  Klartag and Livshyts subsequently
obtained the sharp lower bound $c\sqrt n$ by means of their random-rounding
construction \cite[Theorem~1.1]{KL20}.  Thus, unlike the corresponding
problem for volume, the optimal constant in the slicing inequality for
arbitrary measures is of order $\sqrt n$.

In closely related work, Bobkov, Klartag, and Koldobsky proved moment and
slicing estimates for arbitrary measures and combined their slicing
inequality with the example from \cite{KK18} to obtain lower bounds for the
outer-volume-ratio distance from an arbitrary symmetric convex body in $\R^n$ to the classes 
of unit balls of $n$-dimensional subspaces of $L_p$; see \cite{BKK18}.

The same dimensional contrast occurs in the two-body problem studied here:
the lower bound below grows with the dimension.

\begin{theorem}\label{thm:main}
There is an absolute constant $c>0$ such that, for every $n\geq2$,
\[
   c\sqrt n\leq\calC_n\leq\sqrt n.
\]
More explicitly, for all sufficiently large $n$ there are origin-symmetric
convex bodies $K_n,L_n\subset\R^n$ and an even, continuous, strictly positive,
integrable density $f_n$ such that \eqref{eq:section-comparison} holds and
\[
   \frac{\mu_{f_n}(K_n)}{\mu_{f_n}(L_n)}
   \geq c\sqrt n.
\]
\end{theorem}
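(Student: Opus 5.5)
The upper bound is \eqref{eq:known-upper}, so only the lower bound needs proof, and I will take $n$ large. The plan is to turn a sharp counterexample to the one-body slicing inequality for arbitrary measures into a two-body Busemann--Petty counterexample.

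\textbf{Input.} Take from \cite[Theorem~1.1]{KL20} an origin-symmetric convex body $K\subset\R^n$ and an even density $g$ on $K$ such that
\[
   \mu_g(K\cap\xi^\perp)\leq \frac{C}{\sqrt n}\,\mu_g(K)\,|K|^{-1/n}\qquad\text{for all }\xi\in\Sph^{n-1}.
\]
In their construction $K$ is a random polytope, essentially a rounded section of a cube or a Gaussian-type body. The density $g$ is essentially a sum of point-like bumps placed at random lattice-type points of $K$, spread so that no central hyperplane sees much of them. I will regularize $g$ into a continuous, even, strictly positive $f$ by adding a tiny multiple of a Gaussian, and check that both sides change negligibly.

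\textbf{Choice of $L$.} The second body should have small $\mu_f$-measure but large sections. A natural first try is $L=K\cap rB_2^n$, or a dilate $tK$ with $t<1$ chosen so that $\mu_f(L)$ is a $1/\sqrt n$ fraction of $\mu_f(K)$. This does not work well, because the sections of $L$ shrink at the same rate. Instead I will use \emph{support separation}: choose $L$ to be a body whose interior avoids most of the mass points of $f$ off hyperplanes, while containing a neighborhood of the origin on which a separate spherically symmetric component of $f$ lives.

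Concretely, set $f=g+h$, where $h$ is radial and concentrated in a thin shell or ball $\rho B_2^n$ with $\rho B_2^n\subset L$. Then every central section of $L$ receives the same $h$-contribution $A:=\mu_h(\rho B_2^n\cap\xi^\perp)$, independently of $\xi$. The $h$-mass of each section, being the same for every $\xi$, can be tuned to dominate $\sup_\xi\mu_g(K\cap\xi^\perp)+\mu_h(K\cap\xi^\perp)$. Meanwhile the full-dimensional mass $\mu_h(\rho B_2^n)$ is smaller than $A$ by a factor of order $\rho/\sqrt n$, which is the usual ratio between $n$- and $(n-1)$-dimensional measures of a ball.

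\textbf{Spherical averaging.} The averaging step is to replace $g$ by a rotation average over the stabilizer structure where necessary, so that the sup over $\xi$ of the $g$-sections is comparable to their average. This average is computable by the spherical Radon transform identity
\[
   \int_{\Sph^{n-1}}\mu_f(K\cap\xi^\perp)\,d\sigma(\xi)=c_n\int_K f(x)\,|x|^{-1}\,dx .
\]

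\textbf{Bookkeeping and main obstacle.} The measures must be balanced so that
\[
   \mu_f(K)\geq c\sqrt n\,\bigl(\mu_g(L)+\mu_h(L)\bigr),
\]
with $\mu_g(L)$ small by separation. The main obstacle is ensuring $\sup_\xi\mu_g(K\cap\xi^\perp)$ is at most order $\mu_g(K)/\sqrt n$ on the correct scale, uniformly in $\xi$, without $\log n$ losses. A one-scale net argument gives only $\sqrt{n/\log n}$; I would treat the Klartag--Livshyts bound as a black box for exactly this uniform estimate. The remaining issue is then matching the radius $\rho$ to $|K|^{1/n}$, so that the $\sqrt n$ gain survives.
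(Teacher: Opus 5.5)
Your overall shape (the Klartag--Livshyts pair as a black box, a second component $h$ living in a Euclidean ball $L$, and $\varepsilon\gauss$ for positivity) matches the paper. However, the step that makes the sections of $L$ beat those of $K$ does not work as described. You put $h$ radially on $\rho B_2^n$, ``a neighborhood of the origin,'' and claim its section mass can be tuned to dominate $\sup_\xi\mu_g(K\cap\xi^\perp)+\mu_h(K\cap\xi^\perp)$. But $\mu_h(K\cap\xi^\perp)$ scales with the same multiplier, and the Klartag--Livshyts body contains $r_0\sqrt n\,B_2^n$. On $K\cap L$ the component $h$ contributes equally to both sides, so only the mass of $h$ in $L\setminus K$ helps. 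This forces $\rho\geq A\sqrt n$ with $A$ a large absolute constant, so that $\abs{L}^{1/n}\approx A\gg C_0\geq\abs{K}^{1/n}$.

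It also requires a uniform bound on the Lebesgue sections $\abs{K\cap\xi^\perp}$. The slicing inequality you quote does not give this, since it only controls $g$-sections; a flat $K$ of tiny volume can contain $L\cap\xi^\perp$. The paper extracts the inradius $r_0\sqrt n$ of $T$ from the construction and applies the double-cone bound of Lemma~\ref{lem:central-section-volume}. This gives $\abs{T\cap\xi^\perp}\leq C\sqrt n\,C_0^n$, exponentially smaller than $\abs{L\cap\xi^\perp}\geq(cA)^{n-1}$. Then $h$, normalized on $L\setminus(1+\frac1n)T$, has all central sections bounded below by a constant (Proposition~\ref{prop:annular-density}), so a multiple $\lambda$ of order $1/\sqrt n$ suffices. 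With this radius a radial $h$ on $L$ would also work. So ``matching $\rho$ to $\abs{K}^{1/n}$'' is not leftover bookkeeping but the core geometric step. It imposes a lower bound on $\rho$, not only the upper bound that preserves the $\sqrt n$ gain.

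Second, once $L\supset A\sqrt n\,B_2^n$, you must prove $\mu_g(L)=O(1/\sqrt n)$. ``Small by separation'' is an assertion, and nothing in the black box says where the mass of $g$ sits. The spherical Radon identity you wrote down delivers exactly this, used differently. Its constant $\sigma_{n-2}(\Sph^{n-2})/\sigma_{n-1}(\Sph^{n-1})$ is of order $\sqrt n$, so central sections at most $a/\sqrt n$ force $\int_{RB_2^n}g\leq CaR/n$. At $R$ of order $A\sqrt n$ this is $O(A/\sqrt n)$ (Lemma~\ref{lem:spherical-support-separation}); the paper then applies cutoffs to obtain $p$ supported in $T\setminus L$.

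Your intended use of the identity, averaging $g$ over rotations so that the supremum of its sections is comparable to their average, should be dropped. The Klartag--Livshyts bound is already uniform in $\xi$, and rotation-averaging $g$ would spread its mass off the non-rotation-invariant body $K$. With both fixes you get $\mu_f(K)\geq c_0$ and $\mu_f(L)\lesssim A/\sqrt n$, which yields the ratio $c\sqrt n$.
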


For clarity of exposition, we first present a simpler one-scale argument.  It
gives a lower bound within a factor $\sqrt{\log n}$ of the optimal one and
introduces the main geometric mechanism without the multiscale notation.

\begin{theorem}[The one-scale lower bound]\label{thm:one-scale}
There is an absolute constant $c>0$ such that
\[
       \calC_n\geq c\sqrt{\frac{n}{\log n}}
\]
for every $n\geq3$.
\end{theorem}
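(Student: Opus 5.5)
The plan is to build $K$, $L$ and $f$ so that $f$ is a sum of two pieces with disjoint roles: a ``bump'' part $g$ that carries almost all of $\mu_f(K)$ but has uniformly small central sections, and a radial ``shell'' part $h$ that carries essentially all of $\mu_f(L)$ and has central sections larger by a factor $\sqrt n$. Concretely, I would take $N=n^{C}$ independent uniform points $\pm x_1,\dots,\pm x_N$ on $\Sph^{n-1}$ and set $K=\conv\{\pm x_i\}$. I would let $g$ be a mixture of narrow Gaussians, each of width $\varepsilon\approx c/\sqrt{n}$ in the normal direction, centred at the points $\pm x_i$ and truncated or smoothed so that its mass $M_g$ lies essentially inside $K$. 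This is the one-scale construction of Klartag--Koldobsky. Next I would take $L=rB_2^n$, with $r$ of order $\sqrt{\log(N/n)/n}\approx\sqrt{\log n/n}$, below the inradius of $K$ (so $L\subset K$ w.h.p.). I would let $h$ be an even radial density of total mass $M_h$ concentrated in a thin shell just inside $\partial L$. Finally I would set $f=g+h+\eta\gamma_n$ with $\eta\to0$, to obtain strict positivity and continuity without affecting any estimate.

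The first step is the shell computation, which is the spherical-averaging ingredient. For every $\xi$, $\mu_h(L\cap\xi^\perp)\approx \frac{\sigma_{n-2}(\Sph^{n-2})}{\sigma_{n-1}(\Sph^{n-1})}\frac{M_h}{r}\approx c\sqrt n\,M_h/r$, independent of $\xi$, while $\mu_h(L)\approx M_h$. Since $L\subset K$, the shell contributes equally to the sections of $K$ and of $L$. Because of this, \eqref{eq:section-comparison} reduces to the requirement
\[
 \mu_g(K\cap\xi^\perp)\le \mu_g(L\cap \xi^\perp)+\bigl(\mu_h(L\cap\xi^\perp)-\mu_h(K\cap\xi^\perp)\bigr).
\]
To make this comparison favourable, I would in fact place the shell slightly \emph{outside} $K$'s inradius but inside $L$. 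Equivalently, I would shrink $L$'s role: take $L$ to be the slab-free ball of radius $r$ and let the $h$-sections of $K$ be bounded by those of $L$ through support separation. The point is that $\supp g$ is disjoint from $L$, so I need
\[
 \sup_\xi \mu_g(K\cap\xi^\perp)\le c\sqrt n\,M_h/r .
\]

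The second step is the uniform bound on bump sections. A Gaussian bump of mass $m$ and width $\varepsilon$ centred at $x_i$ contributes roughly $\frac{m}{\varepsilon}e^{-\ip{x_i}{\xi}^2/2\varepsilon^2}$ to the section by $\xi^\perp$. Summing over $i$ and using that, for random $x_i$, at most $O(n)$ of the points can satisfy $\abs{\ip{x_i}{\xi}}\lesssim\varepsilon\sqrt{\log n}$ simultaneously for any $\xi$, I aim for
\[
 \sup_\xi\mu_g(K\cap\xi^\perp)\lesssim \frac{M_g}{\varepsilon}\Bigl(\frac nN\cdot C+e^{-c\log n}\Bigr)\lesssim \sqrt n\, M_g\, n^{-c'}
\]
once $N=n^C$ with $C$ large. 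This uniformity is established by a net argument on $\Sph^{n-1}$ together with a union bound, and the logarithmic factor lost there is exactly why this version gives $\sqrt{n/\log n}$. Choosing $M_g/M_h\approx n^{c'}/r$ would then seem to give an even larger ratio. The honest balance, however, comes from the competing requirement that the bump widths be at least $c\sqrt{\log n}/\sqrt n$ for the Gaussians to sit inside $K$ without being cut by its facets. Tracking that forces $\varepsilon\approx r$, and the net outcome is $\mu_f(K)/\mu_f(L)\approx M_g/M_h\approx \sqrt n/\sqrt{\log n}$.

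The main obstacle is this second step. I must prove, with high probability over the points, a bound on $\sup_\xi\sum_i e^{-\ip{x_i}{\xi}^2/2\varepsilon^2}$ that is uniform in $\xi$, while keeping the bumps compatible with the facet structure of $K$. I would first try the approach of Klartag--Koldobsky: bound the number of $x_i$ in any slab $\{\abs{\ip{x}{\xi}}\le t\}$ by Chernoff plus an $\varepsilon$-net, with $t$ on the scale $\sqrt{\log n/n}$. Then I would verify the three-way balance between $\varepsilon$, $r$ and $N$ directly, and only at the end add $\eta\gamma_n$ and let $\eta\to0$.
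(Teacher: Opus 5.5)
There is a genuine gap, and it begins with the geometry of $L$. If $L=rB_2^n\subset K$, which is your choice of $r$ below the inradius, then the section inequality \eqref{eq:section-comparison} fails for every strictly positive $f$. Indeed, $\mu_f(K\cap\xi^\perp)-\mu_f(L\cap\xi^\perp)=\mu_f((K\setminus L)\cap\xi^\perp)>0$ whenever $\xi^\perp$ meets the nonempty open set $\operatorname{int}K\setminus L$. Your reduction to $\sup_\xi\mu_g(K\cap\xi^\perp)\le c\sqrt n\,M_h/r$ silently assumes $\mu_h(K\cap\xi^\perp)=0$, i.e.\ $\supp h\subset L\setminus K$, and that contradicts $L\subset K$. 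You also cannot place the shell ``outside $K$'s inradius but inside $L$'' when $r$ is below the inradius.

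You have the right order $r\approx\sqrt{\log n/n}$, but you are on the wrong side of the constant. For this polytope the inradius and $\sqrt n\abs K^{1/n}$ are both of that order. The construction needs $r=A\sqrt{\log n/n}$ with $A$ \emph{large}, so that for every $\xi$ most of $L\cap\xi^\perp$ lies outside $K$, while the bumps sit far out in the spikes of $K$, outside $L$. Proving this requires an upper bound on $\abs K$, which your plan never uses. The three ingredients are:
\begin{itemize}
\item Gluskin's estimate $\abs K^{1/n}\le C\sqrt{\log n}/n$ for the hull of $n^{O(1)}$ points of $B_2^n$;
\item an inradius bound (the paper adds $\pm e_i$);
\item the double-cone inequality $\abs{K\cap\xi^\perp}\le\frac{n}{2\rho}\abs K$ for $\rho B_2^n\subset K$.
\end{itemize}
Together these give $\abs{K\cap\xi^\perp}\ll\abs{L\cap\xi^\perp}$ uniformly in $\xi$. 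Then the normalized indicator of $L$ minus a neighborhood of $K$ is supported in $L\setminus K$, with sections $\gtrsim\sqrt n/r$ per unit mass (Proposition~\ref{prop:annular-density}). This is where $\sqrt{\log n}$ is lost: the volume radius of $K$ forces $r\approx A\sqrt{\log n/n}$. It is not lost in the net argument, which costs nothing.

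The bumps are mis-scaled as well. With per-coordinate width $\varepsilon\approx c/\sqrt n$, a Gaussian centred at a unit vector $x_i$ puts almost all its mass at norm $\approx\sqrt{1+c^2}$, hence outside $K\subset B_2^n$. Demanding $\varepsilon\ge c\sqrt{\log n/n}$ to avoid the facets goes the wrong way, since wider bumps are cut more. You need $\varepsilon\approx1/n$ (Euclidean radius $O(n^{-1/2})$), with $K$ a dilate such as $3\conv\{\pm x_i,\pm e_i\}$. Then the truncated bumps lie in $K$ at distance $\approx1\gg r$ from the origin.

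Your counting claim also fails. For $\varepsilon\approx n^{-1/2}$, most random points satisfy $\abs{\ip{x_i}{\xi}}\le\varepsilon\sqrt{\log n}$. Moreover, $\sup_\xi\mu_g(K\cap\xi^\perp)\lesssim\sqrt n\,M_g n^{-c'}$ is impossible. Spherical averaging of the central Radon transform (as in Lemma~\ref{lem:spherical-support-separation}) shows that mass $M_g$ in $3B_2^n$ has mean central section at least $c\sqrt n\,M_g$. The ``even larger ratio'' you mention would contradict $\calC_n\le\sqrt n$.

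The correct target is $\sup_\xi\mu_g(\xi^\perp)\le C\sqrt n\,M_g$. It follows from $\frac1N\sum_i e^{-\ip{x_i}{\xi}^2/2\varepsilon^2}\le C\varepsilon\sqrt n$ uniformly in $\xi$, via Bernstein with the two-sided spherical estimate and then a net. Combined with $h$'s sections $\gtrsim\sqrt n\,M_h/r$, this allows $M_g/M_h\approx 1/r\approx\sqrt{n/\log n}$. Your final ``balance'' $\varepsilon\approx r$ is asserted, not derived, and does not follow from your estimates.
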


Here a \emph{scale} means a radial level in the random construction.  The
proof of Theorem~\ref{thm:one-scale} is a one-scale specialization of the
construction in \cite[Section~3]{KK18}: all Gaussian centers come from one
family of random directions placed at a single radius.  More precisely, the
random-direction selection, the random polytope, and the Gaussian-mixture
estimate are adapted from \cite[Section~3]{KK18}.  We include complete proofs
 of the spherical-coordinate estimate, the concentration and
random-selection arguments, and the Gaussian marginal calculation.  Gluskin's
polytope-volume estimate is the only non-elementary volumetric input.

The geometric part added here is a support-separation and gluing procedure.
One first constructs a probability density $p$ supported in an outer convex
body $T$ but outside a Euclidean ball $L$, with every central hyperplane
integral at most $C/\sqrt n$.  A second probability density $h$, supported in
$L\setminus T$, has uniformly large central hyperplane integrals.  A suitably
small multiple of $h$ makes every central section of $L$ larger than the
corresponding section of $T$, while keeping the total measure of $L$ small (see Figure \ref{fig:support-separation}).
\begin{figure}[H]
\centering
\begin{tikzpicture}[scale=1.05, line join=round]
  \coordinate (O) at (0,0);

  \begin{scope}
    \clip (O) circle (2.25);
    \path[fill=blue!16, even odd rule]
      (-3.7,-2.8) rectangle (3.7,2.8)
      (-3.25,0) --
      (-2.15,1.05) --
      (-0.75,1.65) --
      (0,1.82) --
      (0.75,1.65) --
      (2.15,1.05) --
      (3.25,0) --
      (2.15,-1.05) --
      (0.75,-1.65) --
      (0,-1.82) --
      (-0.75,-1.65) --
      (-2.15,-1.05) -- cycle;
  \end{scope}

  \begin{scope}
    \clip
      (-3.25,0) --
      (-2.15,1.05) --
      (-0.75,1.65) --
      (0,1.82) --
      (0.75,1.65) --
      (2.15,1.05) --
      (3.25,0) --
      (2.15,-1.05) --
      (0.75,-1.65) --
      (0,-1.82) --
      (-0.75,-1.65) --
      (-2.15,-1.05) -- cycle;
    \path[fill=red!11, even odd rule]
      (-3.7,-2.8) rectangle (3.7,2.8)
      (O) circle (2.25);
  \end{scope}

  \draw[blue!65!black, very thick]
      (O) circle (2.25);

  \draw[black!70, very thick]
      (-3.25,0) --
      (-2.15,1.05) --
      (-0.75,1.65) --
      (0,1.82) --
      (0.75,1.65) --
      (2.15,1.05) --
      (3.25,0) --
      (2.15,-1.05) --
      (0.75,-1.65) --
      (0,-1.82) --
      (-0.75,-1.65) --
      (-2.15,-1.05) -- cycle;

  \foreach \x/\y in {
      2.70/0.16, 2.52/-0.25, 2.42/0.42,
      -2.70/-0.16, -2.52/0.25, -2.42/-0.42}
    \shade[
      inner color=red!80,
      outer color=red!8
    ] (\x,\y) circle (0.13);

  \foreach \x/\y/\a in {
      0/2.04/0, 0.78/1.91/-15, -0.78/1.91/15,
      0/-2.04/0, 0.78/-1.91/15, -0.78/-1.91/-15}
    \fill[
      blue!65,
      opacity=0.48,
      rotate around={\a:(\x,\y)}
    ] (\x,\y) ellipse (0.30 and 0.10);

  \fill (O) circle (0.025);
  \node[below right] at (O) {$0$};

  \node[blue!65!black, anchor=west]
      at (1.55,2.28) {$L\cap\xi^\perp$};

  \node[black!75, anchor=west]
      at (2.52,1.05) {$T\cap\xi^\perp$};

  \node[red!70!black, anchor=west]
      at (2.82,-0.55) {$\operatorname{supp}p$};
  \draw[->, red!70!black]
      (2.82,-0.48) -- (2.55,-0.25);

  \node[blue!70!black, anchor=east]
      at (-1.22,2.46) {$\operatorname{supp}h$};
  \draw[->, blue!70!black]
      (-1.17,2.38) -- (-0.78,1.97);

  \node[black!55, anchor=west]
      at (-3.45,-2.55)
      {\small schematic view in $\xi^\perp$};
\end{tikzpicture}

\caption{Schematic representation of the support-separation and gluing
procedure inside a central hyperplane $\xi^\perp$. The density $p$ is
supported in $T\setminus L$ and has small central hyperplane integrals,
whereas $h$ is supported in $L\setminus T$ and has uniformly large central
hyperplane integrals.}
\label{fig:support-separation}
\end{figure}

The one-scale construction gives
$
       \abs T^{1/n}\leq C\sqrt{\log n}.$ 
Accordingly, the comparison ball has radius $A\sqrt{n\log n}$, and the central
hyperplane integrals of $h$ are of order $1/\sqrt{\log n}$.

For the sharp application, we use the construction of Klartag and Livshyts
\cite{KL20} only through a black-box consequence of their proof.  It supplies
an outer body $T$ with bounded volume radius and an even probability density
that gives positive mass to $T$ and has central hyperplane integrals of order
$n^{-1/2}$.  A spherical-averaging identity then shows that only
$O(n^{-1/2})$ of this mass can lie in a fixed ball of radius $A\sqrt n$.
Continuous cutoffs therefore produce the required density $p$ supported in
$T\setminus A\sqrt nB_2^n$.  This argument uses neither the individual
Gaussian components nor the multiscale parameters in \cite{KL20}.

We would like to stress that Theorem~\ref{thm:main} is not a formal consequence of the slicing example in
\cite{KL20}: the present problem requires one density and two bodies with
ordered sections but reversed total masses.  The new support-separation
lemma, the annular comparison density, and the gluing principle convert the
Klartag--Livshyts example into such a two-body comparison.

\noindent\textbf{Organization of the paper.}
Sections~\ref{sec:one-scale}--\ref{sec:one-application} give the complete
one-scale proof.  Section~\ref{sec:KL} uses  the main result of \cite{KL20} as a
black box and derives the support separation needed for the sharp lower
bound.  Section~\ref{sec:sharp-application} completes its proof.

\section{{The one-scale random construction}}
\label{sec:one-scale}
We first record the only non-elementary volumetric input needed
in the one-scale argument.  Gluskin proved that if a centrally symmetric polytope
$Q\subset B_2^n$ has at most $2M$ vertices, then
\[
   \left(\frac{\abs Q}{\abs{B_2^n}}\right)^{1/n}
   \leq
   C\min\left\{1,\frac1{\sqrt n}
       \sqrt{1+\log(M/n)}\right\};
\]
see \cite[introduction, equation~(1)]{Glu89}.  Combining this with
$\abs{B_2^n}^{1/n}\leq C/\sqrt n$ (see, for example,
Lemma~\ref{lem:ball-estimates} below) gives the formulation below.  The
nonsymmetric formulation follows because the convex hull of $M$ points is
contained in their absolute convex hull, which has at most $2M$ vertices.
See also \cite[Chapter~4]{Pisier89} and \cite[Chapter~6]{AGM21} for the role
of Gluskin polytopes in finite-dimensional Banach-space theory.

\begin{theorem}[Gluskin's polytope-volume estimate]\label{thm:gluskin}
There is an absolute constant $C>0$ such that the following holds.
If $M\geq2n$ and $Q\subset B_2^n$ is the convex hull of at most $M$ points,
then
\[
   \abs{Q}^{1/n}
   \leq
   \frac{C}{n}\sqrt{\log\!\left(1+\frac{M}{n}\right)}.
\]
The same conclusion, with a change of the absolute constant, holds when $Q$
is the absolute convex hull of at most $M$ points of $B_2^n$.
\end{theorem}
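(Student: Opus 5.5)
The plan is to derive Theorem~\ref{thm:gluskin} directly from Gluskin's estimate as quoted above, using only two elementary facts: the standard bound $\abs{B_2^n}^{1/n}\leq C/\sqrt n$ (Lemma~\ref{lem:ball-estimates}), and the monotonicity of volume under inclusion. We first treat the absolute convex hull. Let $x_1,\dots,x_M\in B_2^n$ and set $Q'=\conv\{\pm x_1,\dots,\pm x_M\}$. This is a centrally symmetric polytope contained in $B_2^n$ with at most $2M$ vertices, so Gluskin's inequality applies with the same $M$.

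The main calculation combines the two facts:
\[
   \abs{Q'}^{1/n}\leq \abs{B_2^n}^{1/n}\cdot C\min\Bigl\{1,\tfrac{1}{\sqrt n}\sqrt{1+\log(M/n)}\Bigr\}
   \leq \frac{C}{\sqrt n}\cdot\frac{C}{\sqrt n}\sqrt{1+\log(M/n)} .
\]
It then remains to compare $1+\log(M/n)$ with $\log(1+M/n)$ in the range $M\geq 2n$. Writing $t=M/n\geq2$, we have $\log(1+t)\geq\log 3$ and $1+\log t\leq 1+\log(1+t)$. Hence
\[
   1+\log t\leq\Bigl(\tfrac{1}{\log 3}+1\Bigr)\log(1+t).
\]
Absorbing this factor into the constant gives $\abs{Q'}^{1/n}\leq \frac{C}{n}\sqrt{\log(1+M/n)}$. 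This is the absolute-convex-hull version of the statement.

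For the nonsymmetric statement, let $Q=\conv\{x_1,\dots,x_M\}$. Then $Q\subset Q'$, so $\abs Q\leq\abs{Q'}$, and the same bound follows at once. The ordering of the two statements in the theorem is just cosmetic: the symmetric case is the primary one, and the plain convex hull is deduced from it.

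No step is a genuine obstacle, since the deep input is Gluskin's theorem itself, which we take as given. The only points needing care are minor. First, the hypothesis $M\geq 2n$ is what lets us discard the $\min$ and compare the two logarithmic expressions uniformly. Second, degenerate configurations, where $Q$ has empty interior, are harmless because then $\abs Q=0$. Third, Gluskin's hypothesis of \emph{at most} $2M$ vertices covers repeated or non-extreme points.
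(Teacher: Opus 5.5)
Your proposal is correct and follows the paper's own derivation: it combines Gluskin's estimate for symmetric polytopes with $\abs{B_2^n}^{1/n}\leq C/\sqrt n$, and it passes to the plain convex hull through its inclusion in the absolute convex hull, which has at most $2M$ vertices. Your explicit comparison of $1+\log(M/n)$ with $\log(1+M/n)$ for $M\geq 2n$ fills in a step that the paper leaves implicit.
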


We shall also use the following classical estimates for the volumes and surface area of
Euclidean balls.

\begin{lemma}[Volumes of Euclidean balls]\label{lem:ball-estimates}
For every integer $m\geq2$,
\begin{equation}\label{eq:ball-estimates}
   \frac{c}{\sqrt m}
   \leq
   \abs{B_2^m}^{1/m}
   \leq
   \frac{C}{\sqrt m},
   \qquad
   c\sqrt m
   \leq
   \frac{\abs{B_2^{m-1}}}{\abs{B_2^m}}
   \leq
   C\sqrt m, \qquad c\sqrt m \leq \frac{\sigma_{m-2}(\Sph^{m-2})}
   {\sigma_{n-1}(\Sph^{m-1})}
   \leq C\sqrt m.
\end{equation}
\end{lemma}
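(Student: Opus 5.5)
The plan is to derive all three estimates from the closed formula $\abs{B_2^m}=\pi^{m/2}/\Gamma(\frac m2+1)$, together with Stirling's formula in its two-sided form $\Gamma(x+1)=\sqrt{2\pi x}\,(x/e)^x e^{\theta(x)}$ with $0<\theta(x)<1/(12x)$ for $x>0$. In the third inequality the denominator is read as $\sigma_{m-1}(\Sph^{m-1})$; the index $n$ there is evidently a typo.

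\textbf{Step 1: the volume radius.} Raising the closed formula to the power $1/m$ gives
\[
\abs{B_2^m}^{1/m}=\sqrt\pi\,\Gamma(\tfrac m2+1)^{-1/m}.
\]
By Stirling, $\Gamma(\frac m2+1)^{1/m}=\sqrt{m/(2e)}\,(\pi m)^{1/(2m)}e^{\theta/m}$. The factors $(\pi m)^{1/(2m)}$ and $e^{\theta/m}$ lie between two absolute constants for $m\ge2$. Hence $\abs{B_2^m}^{1/m}$ is comparable to $\sqrt{2\pi e/m}$, which is the first pair of inequalities.

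\textbf{Step 2: the ratio of consecutive volumes.} We have
\[
\frac{\abs{B_2^{m-1}}}{\abs{B_2^m}}=\pi^{-1/2}\,\frac{\Gamma(\frac m2+1)}{\Gamma(\frac m2+\frac12)}.
\]
It therefore suffices to show $c\sqrt x\le \Gamma(x+1)/\Gamma(x+\frac12)\le C\sqrt x$ for $x=m/2\ge1$. The cleanest route is log-convexity of $\Gamma$ (Gautschi/Wendel type): log-convexity gives $\Gamma(x+\frac12)^2\le\Gamma(x)\Gamma(x+1)=\Gamma(x+1)^2/x$, which is the lower bound $\sqrt x$. Applying it again, $\Gamma(x+1)^2\le\Gamma(x+\frac12)\Gamma(x+\frac32)=(x+\frac12)\Gamma(x+\frac12)^2$, which gives the upper bound $\sqrt{x+1/2}\le C\sqrt x$. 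Alternatively, Stirling applied to both Gamma values gives the same result with explicit constants.

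\textbf{Step 3: the surface-area ratio.} Use $\sigma_{k-1}(\Sph^{k-1})=k\abs{B_2^k}$, which follows from polar integration. Then
\[
\frac{\sigma_{m-2}(\Sph^{m-2})}{\sigma_{m-1}(\Sph^{m-1})}=\frac{m-1}{m}\cdot\frac{\abs{B_2^{m-1}}}{\abs{B_2^m}}.
\]
For $m\ge2$ the factor $(m-1)/m$ lies in $[\frac12,1]$, so the estimate of Step~2 transfers with adjusted constants. For $m=2$ one can also just note that $\sigma_0(\Sph^0)=2$ and $\sigma_1(\Sph^1)=2\pi$.

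Nothing here is a genuine obstacle. The only point needing care is keeping the Stirling error terms uniform down to $m=2$. The log-convexity argument for the Gamma ratio avoids this entirely, and the small-$m$ cases of Step~1 can be checked directly if one prefers.
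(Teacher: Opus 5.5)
Your proposal is correct and follows the same route the paper indicates. The paper's argument uses the closed formula $\abs{B_2^m}=\pi^{m/2}/\Gamma(m/2+1)$, the relation $\sigma_{m-1}(\Sph^{m-1})=m\abs{B_2^m}$, Stirling's formula, and the gamma-ratio estimate it cites from DLMF 5.6.4; your log-convexity derivation is a self-contained proof of that gamma-ratio estimate, and you are right that $\sigma_{n-1}$ in the third estimate should read $\sigma_{m-1}$.
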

Those estimates follow directly from
$\abs{B_2^m}=\pi^{m/2}/\Gamma(m/2+1)$,  $\sigma_{m-1}(\Sph^{m-1})=m\abs{B_2^m}$ and the standard Stirling and
gamma-ratio estimates; see \cite[Section~5.6(i),
formula~(5.6.4)]{NIST} and \cite[Appendix~A]{Pisier89}.

We shall use the standard Gaussian probability density
\begin{equation}\label{eq:standard-Gaussian}
   \gauss(x)
   =(2\pi)^{-n/2}\exp\!\left(-\frac{\abs{x}^2}{2}\right),
   \qquad x\in\R^n.
\end{equation}
For a unit vector $\xi$, a point $z\in\R^n$, and $t\in\R$, direct integration
over the affine hyperplane $\xi^\perp+t\xi$ gives
\begin{equation}\label{eq:Gaussian-affine-section}
 \int_{\xi^\perp+t\xi}\gauss(x-z)\,
       d\lambda_{\xi^\perp+t\xi}(x)
 =
 \frac1{\sqrt{2\pi}}
 \exp\!\left[-\frac{(t-\ip{z}{\xi})^2}{2}\right].
\end{equation}

We shall use the following compact-support truncation, so we
isolate it here.
Fix a continuous function $\chi:[0,\infty)\to[0,1]$ such that
\[
\chi(s)=
\begin{cases}
1, & 0\leq s\leq\sqrt{2},\\[2mm]
\dfrac{2-s}{2-\sqrt{2}}, & \sqrt{2}<s<2,\\[2mm]
0, & s\geq2,
\end{cases}
\]
and set
\begin{equation}\label{eq:common-mchi}
   m_\chi=\int_{\R^n}\gauss(y)
       \chi\!\left(\frac{\abs y}{\sqrt n}\right)\,dy.
\end{equation}

\begin{lemma}[Truncation of an even Gaussian mixture]
\label{lem:Gaussian-truncation}
The normalizing constant $m_\chi$ defined in
\eqref{eq:common-mchi} satisfies
\begin{equation}\label{eq:common-mchi-bounds}
       \frac12\leq m_\chi\leq1.
\end{equation}
Let $z_1,\ldots,z_M\in\R^n$ and define
\[
   g_Z(x)=\frac1{2M}\sum_{j=1}^M
   \bigl[\gauss(x-z_j)+\gauss(x+z_j)\bigr].
\]
Then the function
\begin{equation}\label{eq:truncated-mixture}
   p_Z(x)=\frac1{2Mm_\chi}\sum_{j=1}^M
   \bigg[
   \gauss(x-z_j)
   \chi\!\left(\frac{\abs{x-z_j}}{\sqrt n}\right)
   +
   \gauss(x+z_j)
   \chi\!\left(\frac{\abs{x+z_j}}{\sqrt n}\right)
   \bigg]
\end{equation}
is an even continuous probability density satisfying
$0\leq p_Z\leq2g_Z$ and
\begin{equation}
   \supp p_Z\subset
   \bigcup_{j=1}^M
   \bigl[
      (z_j+2\sqrt nB_2^n)
      \cup
      (-z_j+2\sqrt nB_2^n)
   \bigr].
   \label{eq:truncation-support}
\end{equation}
Consequently, for every $\xi \in \Sph^{n-1}$,
$\int_{\xi^\perp}p_Z\leq2\int_{\xi^\perp}g_Z.$
\end{lemma}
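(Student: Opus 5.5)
The proof splits into four short checks: the bounds on $m_\chi$, the elementary properties of $p_Z$, the support inclusion, and finally the comparison of central sections. None of them needs more than the definitions and a standard Gaussian norm estimate.

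\emph{Bounds on $m_\chi$.} Since $0\le\chi\le1$ and $\gauss$ is a probability density, $m_\chi\le1$ is immediate. For the lower bound, $\chi(s)=1$ for $s\le\sqrt2$, so
\[
   m_\chi\geq\gauss\bigl(\{\abs y\leq\sqrt{2n}\}\bigr)=1-P(\abs G^2>2n),
\]
where $G$ is a standard Gaussian vector in $\R^n$. Since $\mathbb E\abs G^2=n$, Markov's inequality gives $P(\abs G^2>2n)\le\frac12$. Hence $m_\chi\ge\frac12$. This is the only step requiring a moment's thought: the threshold $\sqrt2$ in the definition of $\chi$ is exactly what makes the crude Markov bound suffice, with no concentration argument needed.

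\emph{Basic properties of $p_Z$.}
\begin{itemize}
\item Continuity follows because each summand is a product of continuous functions.
\item Evenness follows because replacing $x$ by $-x$ and using $\gauss(-y)=\gauss(y)$ swaps the $z_j$ and $-z_j$ summands, since $\abs{-x-z_j}=\abs{x+z_j}$.
\item For the total mass, translate each summand: each integrates to $m_\chi$. There are $2M$ summands, so the total is $2Mm_\chi/(2Mm_\chi)=1$.
\item For the pointwise bound, use $0\le\chi\le1$ and $m_\chi\ge\frac12$: this gives $0\le p_Z\le\frac{1}{2M\cdot\frac12}\sum_j[\gauss(x-z_j)+\gauss(x+z_j)]=2g_Z$.
\end{itemize}

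\emph{Support inclusion.} A summand is nonzero only if $\abs{x\mp z_j}<2\sqrt n$, because $\chi(s)=0$ for $s\ge2$. So $\{p_Z\neq0\}$ is contained in the union of the open balls $\pm z_j+2\sqrt n\,\mathrm{int}B_2^n$. Taking closures gives \eqref{eq:truncation-support}, since the right-hand side is a finite union of closed balls.

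\emph{Central sections.} The final inequality comes from integrating the pointwise bound $0\le p_Z\le2g_Z$ over $\xi^\perp$ with respect to $\lambda_{\xi^\perp}$. Both integrals are finite, as \eqref{eq:Gaussian-affine-section} shows for $g_Z$.
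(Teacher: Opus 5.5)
Your proposal is correct and follows essentially the same route as the paper: Markov's inequality applied to $\abs{G}^2$, using $\chi=1$ on $[0,\sqrt2]$, gives $m_\chi\geq\frac12$; translation invariance gives the total mass; and the pointwise bound $p_Z\leq2g_Z$ is integrated over $\xi^\perp$. Your explicit checks of evenness and of the support inclusion (nonvanishing only on the open balls $\pm z_j+2\sqrt n\,\mathrm{int}B_2^n$, then closure) fill in details that the paper calls immediate.
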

\begin{proof}
If $G$ is a standard Gaussian vector in $\R^n$, then $
   m_\chi=\mathbb E\,\chi\!\left(\frac{\abs G}{\sqrt n}\right).
$
Since $\chi=1$ on $[0,\sqrt2]$ and $\mathbb E\abs G^2=n$, Markov's
inequality gives
\[
   m_\chi\geq\mathbb P\{\abs G^2\leq2n\}
   \geq1-\frac{\mathbb E\abs G^2}{2n}=\frac12.
\]
The upper bound $1 \ge m_\chi$ follows from $0\leq\chi\leq1$.  Translation invariance shows
that every truncated Gaussian in \eqref{eq:truncated-mixture} has integral
$m_\chi$; hence $p_Z$ is a probability density.  Its evenness, continuity,
and support inclusion are immediate from the definition.  Finally,
\eqref{eq:common-mchi-bounds} and $0\leq\chi\leq1$ give the pointwise bound
$p_Z\leq2g_Z$, and integration over $\xi^\perp$ proves the last assertion.
\end{proof}
For $t\in\R$, set
\[
   \varphi(t)=e^{-t^2/2}.
\]

{
\begin{lemma}[The distribution of a spherical coordinate]
\label{lem:spherical-mean}
Let $n\geq3$, let $\Theta$ be uniformly distributed on $\Sph^{n-1}$, and fix
$\xi\in\Sph^{n-1}$.  There are absolute constants $c,C>0$ such that
\[
   \frac{c}{\sqrt n}
   \leq
   \mathbb E\varphi\!\left(n\ip{\Theta}{\xi}\right)
   \leq
   \frac{C}{\sqrt n}.
\]
\end{lemma}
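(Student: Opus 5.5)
We need to show $\mathbb E\, e^{-n^2\ip{\Theta}{\xi}^2/2}\asymp n^{-1/2}$. The plan is to write the expectation as an explicit one-dimensional integral using the density of $t=\ip{\Theta}{\xi}$, and then estimate that integral from above and below.

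By rotation invariance, $t=\ip{\Theta}{\xi}$ has density
\[
a_n(1-t^2)^{(n-3)/2}\quad\text{on }[-1,1],\qquad
a_n=\frac{\sigma_{n-2}(\Sph^{n-2})}{\sigma_{n-1}(\Sph^{n-1})}.
\]
This follows by slicing the sphere into $(n-2)$-spheres of radius $\sqrt{1-t^2}$, with arclength factor $(1-t^2)^{-1/2}$; it is where the hypothesis $n\ge3$ enters. By Lemma~\ref{lem:ball-estimates}, $c\sqrt n\le a_n\le C\sqrt n$. Therefore
\[
\mathbb E\varphi(n\ip{\Theta}{\xi})=a_n\int_{-1}^1 e^{-n^2t^2/2}(1-t^2)^{(n-3)/2}\,dt .
\]

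\emph{Upper bound.} Bound $(1-t^2)^{(n-3)/2}\le1$. Then
\[
\int_{\R}e^{-n^2t^2/2}\,dt=\frac{\sqrt{2\pi}}{n},
\]
so the expectation is at most $C\sqrt n\cdot\sqrt{2\pi}/n\le C'/\sqrt n$.

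\emph{Lower bound.} Restrict the integral to $|t|\le 1/n$. On this range $e^{-n^2t^2/2}\ge e^{-1/2}$. Also $t^2\le 1/n^2\le 1/9<1/2$, so $1-t^2\ge e^{-2t^2}$ (using $\log(1-s)\ge -2s$ for $s\le 1/2$). Hence
\[
(1-t^2)^{(n-3)/2}\ge e^{-(n-3)t^2}\ge e^{-(n-3)/n^2}\ge e^{-1}.
\]
The interval has length $2/n$, so the integral is at least $2e^{-3/2}/n$. Multiplying by $a_n\ge c\sqrt n$ gives a lower bound $c'/\sqrt n$.

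The only step needing care is the marginal density formula and its normalizing constant. If one prefers to avoid relying on the surface-area ratio in Lemma~\ref{lem:ball-estimates} (whose displayed form contains a typo, $\sigma_{n-1}$ in place of $\sigma_{m-1}$), there is a Gaussian alternative. Write $\Theta=G/|G|$ with $G$ standard Gaussian. Since $\ip{\Theta}{\xi}$ is symmetric with variance $1/n$, one can compare directly: on the event $\{|G|\ge\sqrt n/2\}$, which has probability at least $1/2$ for large $n$, we have $|\ip{\Theta}{\xi}|\le 2|\ip{G}{\xi}|/\sqrt n$. However, combining bounds this way is messier than the explicit computation, so the density computation is the route to follow. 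The main obstacle is only this bookkeeping of constants.
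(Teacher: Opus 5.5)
Your proof is correct and is essentially the paper's argument: you use the marginal density of $\ip{\Theta}{\xi}$, the bound $\sigma_{n-2}(\Sph^{n-2})/\sigma_{n-1}(\Sph^{n-1})\asymp\sqrt n$ from Lemma~\ref{lem:ball-estimates}, the estimate $(1-t^2)^{(n-3)/2}\le1$ for the upper bound, and restriction to $\abs{t}\le1/n$ for the lower bound. Your explicit bound $(1-t^2)^{(n-3)/2}\ge e^{-1}$ via $\log(1-s)\ge-2s$ replaces the paper's limit argument for $c_1$. One small correction: $n\ge3$ is really needed for $(1-t^2)^{(n-3)/2}\le1$, not for the density formula, which also holds for $n=2$.
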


\begin{proof}
The upper estimate also follows from the more general affine estimate
\cite[Lemma~3.2]{KK18}, by taking $R=n$ and $t=0$.  We give a complete
 two-sided proof because the lower estimate is needed when
Lemma~\ref{lem:bernstein} is applied below.

By rotational invariance and the standard spherical-coordinate
formula, the random variable $s=\ip{\Theta}{\xi}$ has density
\[
   \frac{\sigma_{n-2}(\Sph^{n-2})}
   {\sigma_{n-1}(\Sph^{n-1})}
   (1-s^2)^{(n-3)/2}\1_{[-1,1]}(s).
\]
Therefore, applying Lemma \ref{lem:ball-estimates},
$$
   \mathbb E\varphi\!\left(n\ip{\Theta}{\xi}\right)
   =
   \frac{\sigma_{n-2}(\Sph^{n-2})}
   {\sigma_{n-1}(\Sph^{n-1})}\int_{-1}^{1}
   e^{-n^2s^2/2}(1-s^2)^{(n-3)/2}\,ds\leq
   C_0\sqrt n\int_{\R}e^{-n^2s^2/2}\,ds
   \leq \frac{C}{\sqrt n}.
$$
For the reverse bound, restrict the integral to $\abs{s}\leq1/n$.  On this
interval, $
   e^{-n^2s^2/2}\geq e^{-1/2}
$ 
and
\[
   (1-s^2)^{(n-3)/2}
   \geq
   (1-n^{-2})^{(n-3)/2}
   \geq c_1>0
\]
for all $n\geq3$, with an absolute constant $c_1$.  Indeed, the displayed
sequence converges to $1$, and each of its finitely many initial values is
positive.
Consequently,
\[
   \mathbb E\varphi\!\left(n\ip{\Theta}{\xi}\right)
   \geq c_0\sqrt n\cdot\frac{2}{n}\cdot e^{-1/2}c_1
   \geq\frac{c}{\sqrt n}.
\]
\end{proof}
}

\begin{lemma}[A Bernstein-type bound]\label{lem:bernstein}
Let $Y_1,\dots,Y_N$ be independent identically distributed random variables
with values in $[0,1]$, and put $p=\mathbb EY_1$.  Then
\[
   \mathbb P\left\{\frac1N\sum_{i=1}^NY_i\geq2p\right\}
   \leq e^{-cNp},
\]
where $c>0$ is an absolute constant.
\end{lemma}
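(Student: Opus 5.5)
We plan to use the exponential moment (Chernoff) method. Let $S=\sum_{i=1}^N Y_i$ and fix $\lambda>0$. Markov's inequality applied to $e^{\lambda S}$, together with independence, gives
\[
   \mathbb P\{S\geq2Np\}
   \leq e^{-2\lambda Np}\,\bigl(\mathbb E e^{\lambda Y_1}\bigr)^N .
\]
The whole argument therefore reduces to bounding the moment generating function of a single $[0,1]$-valued variable in terms of its mean.

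For this we use convexity of $y\mapsto e^{\lambda y}$ on $[0,1]$. It gives $e^{\lambda y}\leq 1+(e^\lambda-1)y$ for $y\in[0,1]$. Taking expectations and using $1+u\leq e^u$, we get
\[
   \mathbb E e^{\lambda Y_1}\leq 1+(e^\lambda-1)p\leq \exp\bigl((e^\lambda-1)p\bigr).
\]
Substituting this into the Chernoff bound yields
\[
   \mathbb P\{S\geq 2Np\}\leq \exp\bigl(-Np\,[\,2\lambda-(e^\lambda-1)\,]\bigr).
\]

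The final step is to pick $\lambda$ so that the bracket is positive. The choice $\lambda=\log 2$ gives $2\lambda-(e^\lambda-1)=2\log2-1>0$. This proves the lemma with $c=2\log 2-1\approx0.386$.

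There is no real obstacle here. One only needs to note two degenerate cases. If $p=0$, the claimed bound is the trivial estimate $1$. If $p>0$, the event $\{S\geq 2Np\}$ is measurable and the computation above applies verbatim. Since the constant obtained is explicit and independent of $p$ and $N$, it is absolute, as the statement requires. The lemma is used later, when it is combined with the two-sided estimate of Lemma~\ref{lem:spherical-mean}, where $p\asymp n^{-1/2}$.
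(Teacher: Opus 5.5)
Your proof is correct and is essentially the paper's argument: the paper also applies Markov's inequality to $e^{S}$ and uses the same moment-generating-function bound $\mathbb E e^{Y_1}\leq 1+(e-1)p\leq e^{(e-1)p}$, obtained from $Y_1^m\leq Y_1$ in the power series rather than from convexity. In effect the paper fixes $\lambda=1$ and gets $c=3-e$, whereas your choice $\lambda=\log 2$ optimizes the same bound and gives the slightly better constant $c=2\log 2-1$.
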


\begin{proof}
A closely related form, with threshold $3p$ and upper bound $e^{-Np}$,
is proved in \cite[Lemma~3.1]{KK18}.  We reproduce the short argument,
with constants adjusted to obtain the formulation above.
Since $0\leq Y_1\leq1$, we have $Y_1^m\leq Y_1$ for every integer
$m\geq1$.  Therefore
\[
   \mathbb Ee^{Y_1}
   =
   1+\sum_{m=1}^{\infty}\frac{\mathbb EY_1^m}{m!}
   \leq
   1+p\sum_{m=1}^{\infty}\frac1{m!}
   =1+p(e-1)
   \leq e^{p(e-1)}.
\]
Markov's inequality and independence now give
\[
   \mathbb P\left\{\sum_{i=1}^NY_i\geq2Np\right\}
   \leq
   e^{-2Np}\mathbb E\exp\left(\sum_{i=1}^NY_i\right)\leq
   \exp\bigl(-(3-e)Np\bigr).
\]
\end{proof}
The next lemma is the standard volumetric estimate for a net of
the Euclidean sphere; see, for example,
\cite[Chapter~1, Section~1]{Pisier89}.
\begin{lemma}[A Euclidean net of the sphere]\label{lem:sphere-net}
For every $0<\delta\leq1$, there is a finite set
$\mathcal N\subset\Sph^{n-1}$ such that every $\xi\in\Sph^{n-1}$ satisfies $
       \min_{\eta\in\mathcal N}\abs{\xi-\eta}\leq\delta
$
and
\begin{equation}\label{eq:net-cardinality}
       \#\mathcal N
       \leq\left(1+\frac2\delta\right)^n
       \leq\left(\frac3\delta\right)^n.
\end{equation}
\end{lemma}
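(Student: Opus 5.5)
We will prove the statement with a maximal $\delta$-separated set, following the standard volumetric argument. Choose $\mathcal N\subset\Sph^{n-1}$ to be a maximal subset such that $\abs{\eta-\eta'}>\delta$ for all distinct $\eta,\eta'\in\mathcal N$.

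\emph{Finiteness.} Such a set exists and is finite, by the volume count below or by compactness of $\Sph^{n-1}$. One can construct it greedily: every $\delta$-separated set is finite, with cardinality bounded by the same volume estimate. Hence any increasing chain of $\delta$-separated sets stabilizes, and a maximal element exists.

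\emph{Covering.} Maximality gives the covering property immediately. If some $\xi\in\Sph^{n-1}$ had $\abs{\xi-\eta}>\delta$ for all $\eta\in\mathcal N$, then $\mathcal N\cup\{\xi\}$ would still be $\delta$-separated, contradicting maximality. Thus $\min_{\eta\in\mathcal N}\abs{\xi-\eta}\leq\delta$ for every $\xi$.

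\emph{Cardinality bound.} This is the only step with content. Consider the balls $\eta+\frac\delta2B_2^n$ for $\eta\in\mathcal N$.
\begin{itemize}
\item They have pairwise disjoint interiors. If $x$ lay in two of them, then $\abs{\eta-\eta'}\leq\delta$ by the triangle inequality, which is impossible.
\item Since $\abs\eta=1$, each such ball is contained in $(1+\frac\delta2)B_2^n$.
\end{itemize}
Comparing volumes gives
\[
   \#\mathcal N\left(\frac\delta2\right)^n\abs{B_2^n}
   \leq\left(1+\frac\delta2\right)^n\abs{B_2^n},
\]
so that $\#\mathcal N\leq(1+2/\delta)^n$. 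Finally, $0<\delta\leq1$ implies $1\leq1/\delta$, hence $1+2/\delta\leq3/\delta$. This yields \eqref{eq:net-cardinality}.

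No step presents a real obstacle. The only care needed is to note that the open half-radius balls are disjoint, so that the volume comparison is legitimate.
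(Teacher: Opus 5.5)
Your proof is correct and is the standard volumetric argument the paper relies on. The paper does not reproduce that argument; it cites it from Pisier's book: take a maximal $\delta$-separated subset of $\Sph^{n-1}$ and pack the disjoint balls $\eta+\frac\delta2B_2^n$ into $(1+\frac\delta2)B_2^n$.
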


\begin{proposition}[A uniform choice of random directions]
\label{prop:directions}
For all sufficiently large $n$, there exist
$\theta_1,\dots,\theta_N\in\Sph^{n-1}$, where $N=n^2$, such that
\begin{equation}\label{eq:random-directions}
   \frac1N\sum_{i=1}^N
   \varphi\!\left(n\ip{\xi}{\theta_i}\right)
   \leq\frac{C}{\sqrt n}
   \qquad\text{for every }\xi\in\Sph^{n-1}.
\end{equation}
\end{proposition}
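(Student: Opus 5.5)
Choose $\theta_1,\dots,\theta_N$ independent and uniformly distributed on $\Sph^{n-1}$, with $N=n^2$. We show that with positive probability \eqref{eq:random-directions} holds simultaneously for all $\xi$. The argument has three steps: a pointwise estimate for fixed $\xi$, a union bound over a net, and extension from the net to the whole sphere by a Lipschitz estimate.

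\textbf{Pointwise bound.} Fix $\xi\in\Sph^{n-1}$ and put $Y_i=\varphi(n\ip{\xi}{\theta_i})\in[0,1]$. These are i.i.d., and by Lemma~\ref{lem:spherical-mean}
\[
   p=\mathbb EY_1\in\left[\frac{c}{\sqrt n},\frac{C}{\sqrt n}\right].
\]
Lemma~\ref{lem:bernstein} then gives
\[
   \mathbb P\left\{\frac1N\sum_{i=1}^N Y_i\geq 2p\right\}\leq e^{-cNp}\leq e^{-c'n^{3/2}}.
\]
The lower bound on $p$ from Lemma~\ref{lem:spherical-mean} is exactly what makes this exponent of order $n^{3/2}$, which beats any net of cardinality $e^{O(n\log n)}$.

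\textbf{Union bound over a net.} Take the net $\mathcal N$ of Lemma~\ref{lem:sphere-net} with $\delta=n^{-2}$, so that $\#\mathcal N\leq(3n^2)^n=e^{n\log(3n^2)}$. For large $n$,
\[
   \#\mathcal N\, e^{-c'n^{3/2}}<1 .
\]
Hence there is a realization $\theta_1,\dots,\theta_N$ with
\[
   \frac1N\sum_{i=1}^N\varphi(n\ip{\eta}{\theta_i})\leq \frac{2C}{\sqrt n}\qquad\text{for every }\eta\in\mathcal N.
\]

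\textbf{Passage to all $\xi$.} This is the step to check most carefully, because the scaling by $n$ makes the functions steep. The function $\varphi$ is $1$-Lipschitz, since $\abs{\varphi'(t)}=\abs{t}e^{-t^2/2}\leq e^{-1/2}$. Therefore, for $\abs{\xi-\eta}\leq\delta$,
\[
   \bigl|\varphi(n\ip{\xi}{\theta_i})-\varphi(n\ip{\eta}{\theta_i})\bigr|\leq n\abs{\xi-\eta}\leq n\delta=\frac1n .
\]
The averages at $\xi$ and at a nearest net point $\eta$ thus differ by at most $1/n\leq 1/\sqrt n$. It follows that
\[
   \frac1N\sum_{i=1}^N\varphi(n\ip{\xi}{\theta_i})\leq\frac{2C+1}{\sqrt n}\qquad\text{for every }\xi\in\Sph^{n-1}.
\]

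The only constraint is balancing $\delta$ against the Lipschitz constant $n$: we need $\delta\lesssim n^{-3/2}$, while the union bound requires $n\log(3/\delta)\ll Nn^{-1/2}=n^{3/2}$. Both hold with $\delta=n^{-2}$ and $N=n^2$ for all sufficiently large $n$.
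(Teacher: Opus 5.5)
Your proposal is correct and follows the paper's proof essentially step for step. The steps match: Bernstein's bound combined with the two-sided spherical-mean estimate gives the $e^{-cn^{3/2}}$ exponent, followed by a union bound over a net and the $n$-Lipschitz extension to the whole sphere. The only difference is your mesh $\delta=n^{-2}$ in place of the paper's $\delta=n^{-3/2}$; this is immaterial, since both give $n\delta\le n^{-1/2}$ and a net of cardinality $e^{O(n\log n)}$.
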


\begin{proof}
A more general affine version of this random selection and net argument is
proved in \cite[Proposition~3.4]{KK18}.  In fact, its conclusion with
$R=n$, $N=n^2$, and $t=0$ implies \eqref{eq:random-directions} for all
sufficiently large $n$; the mechanism is then iterated at two scales in
\cite[Lemma~3.6]{KK18}.  We include the complete
{one-scale} proof and make
explicit the lower spherical-mean estimate needed for the concentration
exponent.

Let $\Theta_1,\dots,\Theta_N$ be independent and uniformly distributed on
$\Sph^{n-1}$.  For fixed $\xi\in\Sph^{n-1}$, apply
Lemma~\ref{lem:bernstein} to
$
   Y_i=\varphi\!\left(n\ip{\xi}{\Theta_i}\right).
$ 
Put $p_n=\mathbb EY_i$.  By Lemma~\ref{lem:spherical-mean},
\[
       \frac{c_0}{\sqrt n}\leq p_n\leq\frac{C_0}{\sqrt n}.
\]
The event in which the average exceeds $2C_0/\sqrt n$ is contained in the
event in which it exceeds $2p_n$.  Lemma~\ref{lem:bernstein}, together with
$N=n^2$ and the lower bound for $p_n$, therefore gives
\begin{equation}\label{eq:fixed-direction-probability}
   \mathbb P\left\{
   \frac1N\sum_{i=1}^N
   \varphi\!\left(n\ip{\xi}{\Theta_i}\right)
   >\frac{2C_0}{\sqrt n}\right\}
   \leq e^{-cn^{3/2}}.
\end{equation}
Since $
       \varphi'(t)=-t e^{-t^2/2},
$
we have $\sup_{t\in\R}\abs{\varphi'(t)}=e^{-1/2}<1$.  Thus $\varphi$ is
$1$-Lipschitz.  Consequently, for every choice of
$\theta_1,\dots,\theta_N$, the function
\[
   G(\xi)=\frac1N\sum_{i=1}^N
   \varphi\!\left(n\ip{\xi}{\theta_i}\right)
\]
is $n$-Lipschitz on $\Sph^{n-1}$, because
\[
   \abs{G(\xi)-G(\eta)}
   \leq
   \frac1N\sum_{i=1}^N
   n\abs{\ip{\xi-\eta}{\theta_i}}
   \leq n\abs{\xi-\eta}.
\]
Let $\delta=n^{-3/2}$ and let $\mathcal N$ be a $\delta$-net in
$\Sph^{n-1}$ supplied by Lemma~\ref{lem:sphere-net}.  Then
\[
   \#\mathcal N\leq\left(\frac3\delta\right)^n
   \leq e^{C_1n\log n}.
\]
By \eqref{eq:fixed-direction-probability} and the union bound, the probability
that the desired estimate fails at some point of $\mathcal N$ is at most
\[
   e^{C_1n\log n}e^{-cn^{3/2}}<1
\]
for all sufficiently large $n$.  We may therefore fix a realization for
which $G(\eta)\leq2C_0/\sqrt n$ for every $\eta\in\mathcal N$.  Given
$\xi\in\Sph^{n-1}$, choose $\eta\in\mathcal N$ with
$\abs{\xi-\eta}\leq\delta$.  Then
\[
   G(\xi)\leq G(\eta)+n\delta
   \leq\frac{2C_0+1}{\sqrt n},
\]
which proves the proposition.
\end{proof}

Fix once and for all a realization
$\theta_1,\ldots,\theta_N\in\Sph^{n-1}$ satisfying
Proposition~\ref{prop:directions}.  In particular,
\(\abs{\theta_i}=1\) for every \(i\).    Define the origin-symmetric polytope
\begin{equation}\label{eq:P-definition}
   P=\conv
   \{\pm\theta_1,\ldots,\pm\theta_N,\pm e_1,\ldots,\pm e_n\}.
\end{equation} 

\begin{lemma}[Geometry of the {one-scale} polytope]\label{lem:P-geometry}
The polytope $P$ satisfies
\begin{align}
   \frac1{\sqrt n}B_2^n&\subset P, \label{eq:inball-P}\\
   \abs{P}^{1/n}
   &\leq C\frac{\sqrt{\log n}}{n}. \label{eq:P-volume}
\end{align}
Consequently, the origin-symmetric convex body
\begin{equation}\label{eq:T-definition}
       T=3nP
\end{equation}
satisfies
\begin{align}
   3\sqrt n\,B_2^n&\subset T, \label{eq:T-inradius}\\
   \abs{T}^{1/n}&\leq C\sqrt{\log n}. \label{eq:T-volume}
\end{align}
\end{lemma}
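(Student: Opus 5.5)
The statement has two independent parts: the inclusion \eqref{eq:inball-P} and the volume bound \eqref{eq:P-volume}. The claims about $T=3nP$ then follow by homogeneity.

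For \eqref{eq:inball-P}, I would use only the vertices $\pm e_1,\ldots,\pm e_n$. Their convex hull is the cross-polytope $B_1^n$, so $B_1^n\subset P$. Cauchy--Schwarz gives $\abs{x}_1\leq\sqrt n\abs{x}$ for every $x\in\R^n$. Hence every $x$ with $\abs{x}\leq 1/\sqrt n$ satisfies $\abs{x}_1\leq1$, which means $\frac1{\sqrt n}B_2^n\subset B_1^n\subset P$.

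For \eqref{eq:P-volume}, I would apply Theorem~\ref{thm:gluskin} directly. The polytope $P$ is the convex hull of the $M=2N+2n=2n^2+2n$ points $\pm\theta_i,\pm e_j$. All of these are unit vectors, so they lie in $B_2^n$, and hence $P\subset B_2^n$. Since $M\geq2n$, the hypotheses of the theorem hold. Its conclusion gives
\[
\abs P^{1/n}\leq \frac Cn\sqrt{\log(1+2n+2)}\leq C'\frac{\sqrt{\log n}}{n}
\]
for $n\geq3$, where $\log(2n+3)\leq C\log n$ absorbs the constant.

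For the consequences, scaling \eqref{eq:inball-P} by $3n$ gives $3nP\supset\frac{3n}{\sqrt n}B_2^n=3\sqrt n\,B_2^n$, which is \eqref{eq:T-inradius}. By homogeneity, $\abs{T}^{1/n}=3n\abs P^{1/n}\leq 3C'\sqrt{\log n}$, which is \eqref{eq:T-volume}. The random directions $\theta_i$ from Proposition~\ref{prop:directions} play no role here beyond being unit vectors and finitely many. There is no real obstacle: the only substantive input is Gluskin's estimate, which is taken as given, and the remaining care is the bookkeeping of vertex counts and absolute constants.
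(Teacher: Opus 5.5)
Your proposal is correct and follows essentially the same route as the paper's proof. Both use the inclusion $\frac1{\sqrt n}B_2^n\subset B_1^n\subset P$ via Cauchy--Schwarz, then Gluskin's estimate applied to $P\subset B_2^n$ with $M$ of order $n^2$, then scaling by $3n$. The only difference is that the paper rounds the vertex count up to $M=4n^2$, while you use $M=2n^2+2n$ directly, which is purely cosmetic.
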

\begin{proof}
The convex hull of the $2n$ coordinate points is the cross-polytope $B_1^n$.
If $x\in n^{-1/2}B_2^n$, the Cauchy--Schwarz inequality gives
$
       \norm{x}_1
       \leq\sqrt n\,\abs{x}\leq1.
$ 
Thus
$
       \frac1{\sqrt n}B_2^n\subset B_1^n\subset P,
$
which is \eqref{eq:inball-P}.

Every  vertex of $P$ belongs to $B_2^n$, and
$P$ is the convex hull of
$
       2N+2n=2n^2+2n\leq4n^2
$
such points.  Apply Theorem~\ref{thm:gluskin} with $M=4n^2$.  Since
$
       \log\!\left(1+\frac{4n^2}{n}\right)
      \leq C\log n,
$
for $n\geq2$, we obtain \eqref{eq:P-volume}.  Scaling
\eqref{eq:inball-P} and \eqref{eq:P-volume} by $3n$ proves
\eqref{eq:T-inradius} and \eqref{eq:T-volume}.  This is the
one-scale
counterpart of the bounded-volume-radius estimate for the two-scale body in
\cite[Lemma~3.7]{KK18}.
\end{proof}

The factor $3$ in \eqref{eq:T-definition} accommodates the
truncation of the Gaussian bumps used below.  Indeed, the centers
$\pm n\theta_i$ belong to $nP$, while
$2\sqrt n\,B_2^n\subset2nP$ by \eqref{eq:inball-P}.  Hence every truncated
bump is supported in
\[
    \pm n\theta_i+2\sqrt n\,B_2^n
    \subset nP+2nP=3nP.
\]

We now place identical Gaussian bumps at the \(2N\) points
\(\pm n\theta_i\), each of which has Euclidean norm \(n\).  Define
\begin{equation}\label{eq:g-density}
   g(x)=\frac1{2N}\sum_{i=1}^N
   \left[
   \gauss(x-n\theta_i)+\gauss(x+n\theta_i)
   \right].
\end{equation}
Every translated Gaussian has integral one, so $g$ is a probability density.
The pairing of the centers $n\theta_i$ and $-n\theta_i$ shows that $g$ is
even.

\begin{lemma}[Small central sections of the Gaussian mixture]
\label{lem:g-sections}
For every $\xi\in\Sph^{n-1}$,
\[
   \int_{\xi^\perp}g(x)\,d\lambda_{\xi^\perp}(x)
   \leq\frac{C}{\sqrt n}.
\]
\end{lemma}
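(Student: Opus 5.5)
Write the section integral of $g$ as an average of section integrals of single translated Gaussians, and evaluate each one with the affine formula \eqref{eq:Gaussian-affine-section} at $t=0$. By linearity,
\[
   \int_{\xi^\perp}g
   =\frac1{2N}\sum_{i=1}^N\left[\int_{\xi^\perp}\gauss(x-n\theta_i)\,d\lambda_{\xi^\perp}(x)
   +\int_{\xi^\perp}\gauss(x+n\theta_i)\,d\lambda_{\xi^\perp}(x)\right].
\]
Applying \eqref{eq:Gaussian-affine-section} with $z=\pm n\theta_i$ and $t=0$, each of the two integrals equals
\[
   \frac1{\sqrt{2\pi}}\exp\!\left(-\frac{n^2\ip{\theta_i}{\xi}^2}{2}\right)
   =\frac1{\sqrt{2\pi}}\,\varphi\!\left(n\ip{\xi}{\theta_i}\right),
\]
since the sign of the center disappears after squaring.

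Summing the two equal terms gives the identity
\[
   \int_{\xi^\perp}g=\frac1{\sqrt{2\pi}}\cdot\frac1N\sum_{i=1}^N\varphi\!\left(n\ip{\xi}{\theta_i}\right).
\]
The right-hand side is exactly the average controlled by Proposition~\ref{prop:directions}. The directions $\theta_1,\dots,\theta_N$ were fixed as a realization satisfying \eqref{eq:random-directions}, so the average is at most $C/\sqrt n$ for every $\xi\in\Sph^{n-1}$. This yields the claimed bound, with constant $C/\sqrt{2\pi}$.

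There is no real obstacle: all the probabilistic and net work was already done in Proposition~\ref{prop:directions}. The only point to check is that the lemma is stated for all sufficiently large $n$, which is the range in which that proposition supplies the directions and in which $g$ is defined. I would also note explicitly that the bound holds for every $\xi\in\Sph^{n-1}$ simultaneously, not just on the net. This is because \eqref{eq:random-directions} is itself uniform in $\xi$, the net argument having already been absorbed there.
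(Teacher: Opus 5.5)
Your proof is correct and is the paper's argument. You use linearity and the central case $t=0$ of \eqref{eq:Gaussian-affine-section} with $z=\pm n\theta_i$ to obtain the identity $\int_{\xi^\perp}g=\frac{1}{N\sqrt{2\pi}}\sum_{i=1}^N\varphi\bigl(n\ip{\xi}{\theta_i}\bigr)$, and then apply the bound \eqref{eq:random-directions}, which Proposition~\ref{prop:directions} makes uniform in $\xi$.
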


\begin{proof}
The same Gaussian marginal identity is used in the two-scale
construction \cite[equation~(21)]{KK18}, and the resulting affine-hyperplane
estimate is \cite[Lemma~3.9]{KK18}.  Here we only need the central case $t=0$ of
\eqref{eq:Gaussian-affine-section}.  Using $z=\pm n\theta_i$ in
\eqref{eq:g-density}, followed by
\eqref{eq:random-directions}, yields
\[
   \int_{\xi^\perp}g(x)\,d\lambda_{\xi^\perp}(x)
   =
   \frac1{N\sqrt{2\pi}}
   \sum_{i=1}^N
   \varphi\!\left(n\ip{\xi}{\theta_i}\right)
   \leq\frac{C}{\sqrt n}.
\]
\end{proof}

\section{{The support-separated density in the one-scale construction}}
\label{sec:one-outer}

We now turn $g$ into a compactly supported probability density whose support
lies in $T$ but outside a Euclidean comparison ball.  This support separation
is essential for the later comparison argument.

{
\begin{proposition}[The one-scale outer density]\label{prop:one-outer-density}
For every fixed $A>0$, there are an absolute constant $C>0$ and an integer
$n_0=n_0(A)$ such that, for every $n\geq n_0$,
there exists an even continuous probability density $p$ satisfying
\begin{equation}\label{eq:p-support}
   \supp p\subset T\setminus L,
\end{equation}
where $
L=   A\sqrt{n\log n}B_2^n,
$ and
\begin{equation}\label{eq:p-sections}
   \int_{\xi^\perp}p(x)\,d\lambda_{\xi^\perp}(x)
   \leq\frac{C}{\sqrt n}
   \qquad\text{for all }\xi\in\Sph^{n-1}.
\end{equation}
\end{proposition}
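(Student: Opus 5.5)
The plan is to take $p$ to be the truncated Gaussian mixture of Lemma~\ref{lem:Gaussian-truncation} built on the centers $z_i=n\theta_i$, $i=1,\dots,N$. Here $\theta_1,\dots,\theta_N$ is the fixed realization from Proposition~\ref{prop:directions}. In the notation of that lemma this is $p=p_Z$, and the associated untruncated mixture is $g_Z=g$ from \eqref{eq:g-density}. Lemma~\ref{lem:Gaussian-truncation} immediately gives that $p$ is an even continuous probability density. It also gives $0\le p\le 2g$. Integrating this pointwise bound over $\xi^\perp$ and applying Lemma~\ref{lem:g-sections} yields \eqref{eq:p-sections}, with the constant doubled.

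It remains to verify the support condition \eqref{eq:p-support}, which has two parts.

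\emph{Inclusion in $T$.} By \eqref{eq:truncation-support}, $\supp p$ lies in the union of the balls $\pm n\theta_i+2\sqrt n\,B_2^n$. The remark following Lemma~\ref{lem:P-geometry} already shows that each such ball is contained in $nP+2nP=3nP=T$. This uses $\pm n\theta_i\in nP$ and $2\sqrt n\,B_2^n\subset 2nP$, the latter by \eqref{eq:inball-P}.

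\emph{Separation from $L$.} Since $\abs{\theta_i}=1$, every point $x$ in one of these balls satisfies $\abs{x}\ge n-2\sqrt n$. For fixed $A$ we have $n-2\sqrt n>A\sqrt{n\log n}$ for all $n\ge n_0(A)$, because $n/\sqrt{n\log n}\to\infty$. Such points therefore lie strictly outside the closed ball $L=A\sqrt{n\log n}\,B_2^n$. Because the union of finitely many closed balls is closed, it contains the closure of $\{p\neq0\}$, so the strict inequality is inherited by all of $\supp p$.

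Nothing here is a genuine obstacle. The only point requiring care is that the truncation radius $2\sqrt n$ is small compared with the center radius $n$. This is what lets the support avoid $L$ while staying inside $T$. That balance is exactly what the choice $T=3nP$ and the cutoff $\chi$ were designed to provide. We must also take $n_0(A)$ large enough that Proposition~\ref{prop:directions} applies as well.
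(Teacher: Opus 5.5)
Your proposal is correct and follows the paper's proof essentially step for step. It takes $p=p_Z$ with centers $n\theta_i$, uses $p\le 2g$ together with Lemma~\ref{lem:g-sections} for the section bound, gets inclusion in $T=3nP$ from $nP+2nP=3nP$, and gets separation from $L$ via $\abs{x}\ge n-2\sqrt n>A\sqrt{n\log n}$ for large $n$; your extra remark about closedness of the finite union of balls is a harmless added detail.
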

}

\begin{proof}
Apply Lemma~\ref{lem:Gaussian-truncation} to the centers
$z_i=n\theta_i$, $1\leq i\leq N$.  The corresponding mixture $g_Z$ is
exactly the density $g$ in \eqref{eq:g-density}; let $p$ be the truncated
density $p_Z$ from \eqref{eq:truncated-mixture}.  Then $p$ is an even
continuous probability density, and
       $p\leq2g.$ 
Thus Lemma~\ref{lem:g-sections} gives \eqref{eq:p-sections}.

It remains to verify the support claim \eqref{eq:p-support}.  By
\eqref{eq:truncation-support},
\[
   \supp p\subset
   \bigcup_{i=1}^N
   \left[
   \bigl(n\theta_i+2\sqrt n B_2^n\bigr)
   \cup
   \bigl(-n\theta_i+2\sqrt n B_2^n\bigr)
   \right].
\]
Since $\pm n\theta_i\in nP$ and, by \eqref{eq:inball-P},
$2\sqrt nB_2^n\subset2nP$, each translated ball is contained in
\[
       nP+2nP=3nP=T.
\]
Every point in one of these balls has Euclidean norm at least
$n-2\sqrt n$.  For fixed $A$ and all sufficiently large $n$,
\[
       n-2\sqrt n>A\sqrt{n\log n}.
\]
Hence $\supp p\cap L=\varnothing$, proving \eqref{eq:p-support}.
\end{proof}

\section{The common comparison density and gluing principles}
\label{sec:common}

{
We now isolate the part of the proof that is common to the one-scale and
sharp constructions.  The parameter $v$ below records the size of the
volume radius $\abs{Q}^{1/n}$.  The one-scale construction uses
$v=\sqrt{\log n}$, whereas the construction from \cite{KL20} uses $v=1$.
}

\begin{lemma}[Central section versus total volume]
\label{lem:central-section-volume}
Let $Q\subset\R^n$ be an origin-symmetric convex body and suppose that
$rB_2^n\subset Q$.  Then, for every $\xi\in\Sph^{n-1}$,
\[
   \abs{Q\cap\xi^\perp}
   \leq\frac{n}{2r}\abs Q.
\]
\end{lemma}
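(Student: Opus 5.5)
We prove this by a cone (double-cone) comparison inside $Q$, exploiting the inscribed ball in the direction orthogonal to the section. Fix $\xi\in\Sph^{n-1}$ and write $S=Q\cap\xi^\perp$. By hypothesis $rB_2^n\subset Q$, so the two points $\pm r\xi$ lie in $Q$. By convexity, $Q$ contains the two cones
\[
   \conv\bigl(S\cup\{r\xi\}\bigr)
   \quad\text{and}\quad
   \conv\bigl(S\cup\{-r\xi\}\bigr).
\]
Each cone has base $S$ lying in $\xi^\perp$ and height $r$.

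The two cones lie in the opposite closed half-spaces $\{\ip{x}{\xi}\geq0\}$ and $\{\ip{x}{\xi}\leq0\}$. Hence they meet only in the null set $S$, and their volumes add. The volume of a cone of height $r$ over an $(n-1)$-dimensional base is $\frac{r}{n}\abs{S}$. This follows from Fubini: the slice of the cone at height $t\in[0,r]$ is a homothetic copy of $S$ with ratio $1-t/r$, and
\[
   \int_0^r(1-t/r)^{n-1}\,dt=\frac rn .
\]
Therefore
\[
   \abs Q\geq 2\cdot\frac rn\,\abs{S},
\]
which rearranges to $\abs{Q\cap\xi^\perp}\leq \frac{n}{2r}\abs Q$, as claimed.

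There is no real obstacle here. The only points to check are that $S$ is a compact convex $(n-1)$-dimensional set, so that the cone volume formula applies, and that the two cones overlap only in a null set. Both are immediate, since $Q$ is a convex body containing a neighborhood of the origin and the cones sit on opposite sides of $\xi^\perp$. Origin-symmetry of $Q$ is not actually needed beyond the inclusion $rB_2^n\subset Q$, which already supplies both apexes $\pm r\xi$.
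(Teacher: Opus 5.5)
Your proposal is correct and follows essentially the same argument as the paper. Both proofs place two cones over $Q\cap\xi^\perp$ with apexes $\pm r\xi\in Q$, note that the cones lie on opposite sides of $\xi^\perp$, and add their volumes $\frac{r}{n}\abs{Q\cap\xi^\perp}$. Your Fubini computation of the cone volume, and your remark that origin-symmetry is not needed beyond $rB_2^n\subset Q$, are both accurate.
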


\begin{proof}
Since $\pm r\xi\in Q$ and $Q$ is convex, the two cones
$\conv(Q\cap\xi^\perp,r\xi)$ and 
   $\conv(Q\cap\xi^\perp,-r\xi)$
are contained in $Q$, and their interiors are disjoint.  Each cone has
$(n-1)$-dimensional base $Q\cap\xi^\perp$ and height $r$, and therefore has
$n$-dimensional volume $
       \frac{r}{n}\abs{Q\cap\xi^\perp}.$ 
Adding the volumes of the two cones gives
$
       \abs Q\geq\frac{2r}{n}\abs{Q\cap\xi^\perp},
$
which is equivalent to the required estimate.
\end{proof}

\begin{proposition}[Annular comparison density]
\label{prop:annular-density}
For every $r_0,C_0>0$, there exists
$A_*=A_*(r_0,C_0)>0$ such that, for every $A\geq A_*$,
there are constants $c=c(r_0,C_0,A)>0$
    and 
    $n_0=n_0(r_0,C_0,A)\in\mathbb{N}$ 
with the following property. Let $n\geq n_0$, let $1\leq v\leq\sqrt{n}$, and let
$Q\subset\R^n$ be an origin-symmetric convex body satisfying
\begin{equation}\label{eq:abstract-Q-geometry}
    r_0\sqrt{n}\,B_2^n\subset Q,
    \qquad
    \abs{Q}^{1/n}\leq C_0v.
\end{equation}
Set $ U=Av\sqrt{n}\,B_2^n.$ 
Then there exists an even continuous probability density $h$ on $\R^n$
such that
\begin{align}
    \supp h
    &\subset
    U\setminus
    \operatorname{int}\left(\left(1+\frac{1}{n}\right)Q\right),
    \label{eq:abstract-h-support}\\
    \int_{\xi^\perp}h(x)\,d\lambda_{\xi^\perp}(x)
    &\geq \frac{c}{v}
    \qquad\text{for every }\xi\in\Sph^{n-1}.
    \label{eq:abstract-h-sections}
\end{align}
\end{proposition}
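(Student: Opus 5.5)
The plan is to take $h$ to be, up to normalization, a continuous approximation of the indicator of $U\setminus(1+\tfrac1n)Q$. This is a large Euclidean ball with a comparatively tiny convex body removed. The estimates to prove are: (i) $Q$ has exponentially smaller volume than $U$; (ii) by Lemma~\ref{lem:central-section-volume}, the central sections of $Q$ are also negligible compared with those of $U$; (iii) the central sections of the ball $U$, relative to $|U|$, are of order $1/(Av)$.

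\emph{Construction.} Let $\norm{x}_Q$ be the gauge of $Q$, which is even and continuous. Let $\psi:[0,\infty)\to[0,1]$ vanish on $[0,1+\frac1n]$, equal $1$ on $[1+\frac2n,\infty)$, and be linear in between. Let $\rho:[0,\infty)\to[0,1]$ equal $1$ on $[0,(1-\frac1n)Av\sqrt n]$, vanish on $[Av\sqrt n,\infty)$, and be linear in between. Set
\[
  \tilde h(x)=\rho(\abs x)\,\psi(\norm{x}_Q),\qquad h=\tilde h\Big/\int_{\R^n}\tilde h .
\]
Then $h$ is even and continuous. Its support lies in $U$ and avoids the open set $\operatorname{int}((1+\frac1n)Q)$, which gives \eqref{eq:abstract-h-support}. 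We also have
\[
  \1_{(1-\frac1n)U\setminus(1+\frac2n)Q}\leq\tilde h\leq\1_U .
\]
Hence $\int\tilde h\leq\abs U$, and for each $\xi$,
\[
  \int_{\xi^\perp}\tilde h\geq\abs{(1-\tfrac1n)U\cap\xi^\perp}-\abs{(1+\tfrac2n)Q\cap\xi^\perp}.
\]
We must also check that $\int\tilde h>0$; this follows from the volume comparison below.

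\emph{Estimates.}
\begin{itemize}
\item Section of the ball. Using Lemma~\ref{lem:ball-estimates},
\[
  \frac{\abs{(1-\frac1n)U\cap\xi^\perp}}{\abs U}=\Bigl(1-\tfrac1n\Bigr)^{n-1}\frac{\abs{B_2^{n-1}}}{Av\sqrt n\,\abs{B_2^n}}\geq\frac{c_1}{Av}.
\]
\item Volume of $Q$. Since $(1+\frac2n)^n\leq e^2$,
\[
  \abs{(1+\tfrac2n)Q}\leq e^2C_0^nv^n,\qquad \abs U\geq(c_2Av)^n .
\]
\item Section of $Q$. The body $(1+\frac2n)Q$ contains $r_0\sqrt n\,B_2^n$, so Lemma~\ref{lem:central-section-volume} gives
\[
  \abs{(1+\tfrac2n)Q\cap\xi^\perp}\leq\frac{\sqrt n}{2r_0}\,e^2C_0^nv^n .
\]
\end{itemize}
Dividing by $\abs U$, the last quantity is at most $\frac{e^2\sqrt n}{2r_0}\bigl(\frac{C_0}{c_2A}\bigr)^n$. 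Choose $A_*$ with $c_2A_*\geq2C_0$. Then for every $A\geq A_*$ this ratio is at most $\frac{e^2\sqrt n}{2r_0}2^{-n}$. This is smaller than $\frac{c_1}{2A\sqrt n}\leq\frac{c_1}{2Av}$ once $n\geq n_0(r_0,C_0,A)$, where we used $v\leq\sqrt n$. The same comparison shows $\abs{(1+\frac2n)Q}\ll\abs{(1-\frac1n)U}$, so $\int\tilde h>0$. We conclude
\[
  \int_{\xi^\perp}h\geq\frac{c_1}{2Av}=\frac{c}{v},
\]
which is \eqref{eq:abstract-h-sections}.

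\emph{Main obstacle.} The only delicate point is the comparison of central sections of $Q$ with those of $U$. A bound on $\abs Q$ alone does not control $\abs{Q\cap\xi^\perp}$, and this is exactly where the inradius assumption $r_0\sqrt n\,B_2^n\subset Q$ and Lemma~\ref{lem:central-section-volume} enter. The resulting loss is only a factor $\sqrt n/r_0$, which is absorbed by the exponential gain $2^{-n}$ from choosing $A$ large.
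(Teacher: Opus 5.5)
Your proof is correct and follows essentially the same route as the paper: the same product of a radial cutoff and a gauge cutoff, the same lower bound for sections via the indicator of $(1-\frac1n)U\setminus(1+\frac2n)Q$, and Lemma~\ref{lem:central-section-volume} to make the sections of $Q$ negligible against the exponential gain from a large $A$. The only cosmetic difference is that you apply the lemma to $(1+\frac2n)Q$ directly and normalize by $\abs U$ throughout, whereas the paper applies it to $Q$ and absorbs the factor $(1+\frac2n)^{n-1}\leq e^2$ separately.
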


\begin{proof}
Set $R=Av\sqrt n$, so that $U=RB_2^n$. Define continuous functions $\alpha,\beta:[0,\infty)\to[0,1]$ by
\[
 \alpha(s)=
 \begin{cases}
  1,&0\leq s\leq1-\frac1n,\\
  n(1-s),&1-\frac1n\leq s\leq1,\\
  0,&s\geq1,
 \end{cases}
 \qquad
 \beta(s)=
 \begin{cases}
  0,&0\leq s\leq1+\frac1n,\\
  n(s-1)-1,&1+\frac1n\leq s\leq1+\frac2n,\\
  1,&s\geq1+\frac2n.
 \end{cases}
\]
In what follows, the cutoff $\alpha$ confines the density to $U$, whereas $\beta$ removes an
open neighborhood of $Q$.  The transition widths are of order $1/n$ because
dilating an $n$-dimensional body by $1+O(1/n)$ changes its volume by only an
absolute factor.

Write $
       \norm{x}_Q=\inf\{a>0:x\in aQ\}$
for the Minkowski functional of $Q$.  Since $Q$ is an origin-symmetric convex body, this
functional is a norm; in particular, it is continuous.  Define
\begin{equation}\label{eq:abstract-h0}
       h_0(x)
       =
       \alpha\!\left(\frac{\abs x}{R}\right)
       \beta\!\left(\norm{x}_Q\right).
\end{equation}
The function $h_0$ is even, continuous, and nonnegative.  Since $\alpha=0$
on $[1,\infty)$, it is supported in $U$.  Since $\beta=0$ on $[0,1+1/n]$, we have $
       h_0=0$ on $\left(1+\frac1n\right)Q$. 
In particular, $h_0$ vanishes on an open neighborhood of $Q$, since
\[
       Q\subset
       \operatorname{int}\left(\left(1+\frac1n\right)Q\right).
\]
Moreover,
\[
       \supp h_0
       \subset
       U\setminus
       \operatorname{int}\left(\left(1+\frac1n\right)Q\right)
       \subset U\setminus Q.
\]
The geometry of the cutoff construction is illustrated schematically in  Figure~\ref{fig:annular-density}.

\begin{figure}[ht]
\centering
\begin{tikzpicture}[scale=0.86,>=Latex]
\def\Qshape{plot[smooth cycle,tension=0.78] coordinates {
(-4.0,0) (-2.8,1.05) (-1.35,1.48) (0,1.58)
(1.35,1.48) (2.8,1.05) (4.0,0)
(2.8,-1.05) (1.35,-1.48) (0,-1.58)
(-1.35,-1.48) (-2.8,-1.05)}}
\def\Qplus{plot[smooth cycle,tension=0.78] coordinates {
(-4.28,0) (-3.00,1.12) (-1.44,1.58) (0,1.69)
(1.44,1.58) (3.00,1.12) (4.28,0)
(3.00,-1.12) (1.44,-1.58) (0,-1.69)
(-1.44,-1.58) (-3.00,-1.12)}}
\def\Qplusplus{plot[smooth cycle,tension=0.78] coordinates {
(-4.52,0) (-3.16,1.19) (-1.52,1.67) (0,1.78)
(1.52,1.67) (3.16,1.19) (4.52,0)
(3.16,-1.19) (1.52,-1.67) (0,-1.78)
(-1.52,-1.67) (-3.16,-1.19)}}

\fill[gray!15] (0,0) circle (2.55);
\fill[white] \Qplus;

\begin{scope}
\clip (0,0) circle (2.30);
\fill[gray!45,even odd rule]
(-5,-4) rectangle (5,4) \Qplusplus;
\end{scope}

\draw[very thick] (0,0) circle (2.55);        
\draw[thick] \Qshape;                         
\draw[dashed] \Qplus;                         
\draw[densely dotted] (0,0) circle (2.30);    
\draw[densely dotted] \Qplusplus;             

\node at (0,0) {$Q$};
\node[above right] at (1.68,1.88) {$U$};

\node[fill=white,inner sep=2pt] (support) at (0,2.92)
{possible support of $h_0$};
\draw[->] (support.south) -- (0.35,2.33);

\node[fill=white,inner sep=2pt] (one) at (-3.35,-2.15)
{region where $h_0=1$};
\draw[->] (one.north east) -- (-1.65,-1.88);
\end{tikzpicture}
\caption{A schematic two-dimensional section of the annular construction.
The light-gray region is
$U\setminus\operatorname{int}((1+1/n)Q)$, which contains the support of
$h_0$.  The dark-gray region is
$(1-1/n)U\setminus(1+2/n)Q$, where $h_0=1$.
We note that the body $Q$ and the Euclidean ball $U$ need not be nested.}
\label{fig:annular-density}
\end{figure}

\noindent Next, $h_0=1$ on
$
   \left(1-\frac1n\right)RB_2^n
   \setminus
   \left(1+\frac2n\right)Q.
$
Consequently, 
\begin{equation}\label{eq:abstract-h0-section}
   \int_{\xi^\perp}h_0
   \geq{}
   \abs{B_2^{n-1}}
   \left[\left(1-\frac1n\right)R\right]^{n-1}-
   \left(1+\frac2n\right)^{n-1}\abs{Q\cap \xi^\perp}.
\end{equation}
We compare the two terms in the right-hand side.  The inradius assumption in
\eqref{eq:abstract-Q-geometry} and
Lemma~\ref{lem:central-section-volume} give
\[
   \abs{Q\cap \xi^\perp}
   \leq
   \frac{n}{2r_0\sqrt n}\abs Q
   \leq
   \frac{\sqrt n}{2r_0}(C_0v)^n.
\]
Since $(1+2/n)^{n-1}\leq e^2$, the second term on the right-hand side of
\eqref{eq:abstract-h0-section} is at most
\begin{equation}\label{eq:abstract-bad-term}
       C_1\sqrt n\,(C_0v)^n,
\end{equation}
where $C_1$ depends only on $r_0$.

On the other hand, Lemma \ref{lem:ball-estimates} and the identity $R=Av\sqrt n$ imply
\begin{equation}\label{eq:abstract-good-term}
 \abs{B_2^{n-1}}
 \left[\left(1-\frac1n\right)R\right]^{n-1}
 \geq(c_1Av)^{n-1}
\end{equation}
for an absolute constant $c_1>0$.  The ratio of
\eqref{eq:abstract-bad-term} to \eqref{eq:abstract-good-term} is at most
$
       C_2\sqrt n\,v
       \left(\frac{C_0}{c_1A}\right)^{n-1}.
$
Because $v\leq\sqrt n$, this is bounded by
$$
       C_2n
       \left(\frac{C_0}{c_1A}\right)^{n-1}.
$$
Choose $A_*$ so large that $C_0/(c_1A_*)<1/2$.  For every fixed
$A\geq A_*$, the last expression tends to zero.  Hence, for all sufficiently
large $n$, the second term in
\eqref{eq:abstract-h0-section} is at most one half of the first, and
\begin{equation}\label{eq:abstract-h0-lower}
   \int_{\xi^\perp}h_0
   \geq
   \frac12\abs{B_2^{n-1}}
   \left[\left(1-\frac1n\right)R\right]^{n-1}.
\end{equation}
The lower bound \eqref{eq:abstract-h0-lower} shows that $h_0$ is not
identically zero; hence $\int_{\R^n}h_0(x)\,dx>0$.
Moreover, $0\leq h_0\leq1$ and $\supp h_0\subset U$, hence
\[
       \int_{\R^n}h_0(x)\,dx\leq\abs U=\abs{B_2^n}R^n.
\]
Define $h=h_0 /\int h_0$.  Then $h$ is an even continuous probability density and
\eqref{eq:abstract-h-support} holds. Finally,
\eqref{eq:abstract-h0-lower}, Lemma \ref{lem:ball-estimates}  and $(1-1/n)^{n-1}\geq1/4$ give
\[
   \int_{\xi^\perp}h
   \geq
   \frac{\abs{B_2^{n-1}}}
        {8\abs{B_2^n}R}\geq
   c_2\frac{\sqrt n}{Av\sqrt n}
   =\frac{c_2/A}{v}.
\]
This proves \eqref{eq:abstract-h-sections}.
\end{proof}

\begin{proposition}[The gluing principle]\label{prop:gluing-principle}
Let $1\leq v\leq\sqrt n$, and let $Q,U\subset\R^n$ be origin-symmetric convex
bodies.  Suppose that there are even continuous probability densities $p,h$
on $\R^n$ and positive absolute constants $a,b$ such that
\begin{align}
       \supp p&\subset Q\setminus U,\qquad \int_{\xi^\perp}p
       \leq\frac{a}{\sqrt n} \qquad\text{for all }\xi\in\Sph^{n-1},
          \\
       \supp h&\subset U\setminus Q,\qquad
       \int_{\xi^\perp}h
       \geq\frac{b}{v}
       \qquad\text{for all }\xi\in\Sph^{n-1}.
\end{align}
Then there is an even, continuous, strictly positive, integrable density $f$
such that
\[
       \mu_f(Q\cap\xi^\perp)
       <
       \mu_f(U\cap\xi^\perp)
       \qquad\text{for all }\xi\in\Sph^{n-1}
\]
and
\begin{equation}\label{eq:abstract-mass-ratio}
       \frac{\mu_f(Q)}{\mu_f(U)}
       \geq c\frac{\sqrt n}{v},
\end{equation}
where $c>0$ depends only on $a$ and $b$.
\end{proposition}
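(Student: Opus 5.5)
The density will be a positive combination $f=p+\lambda h+\varepsilon\gamma$, where $\gamma=\gauss$ is the standard Gaussian density. The multiple $\lambda$ of $h$ is chosen so that the $h$-part makes every central section of $U$ dominate. The $\varepsilon\gamma$ term enforces strict positivity and is taken so small that it perturbs nothing. Evenness, continuity and integrability of $f$ are immediate, and $f>0$ everywhere because $\gamma>0$.

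Section comparison. Since $\supp p\subset Q\setminus U$, the density $p$ contributes nothing to $\mu_f(U\cap\xi^\perp)$, and we have $\int_{Q\cap\xi^\perp}p\le\int_{\xi^\perp}p\le a/\sqrt n$. Since $\supp h\subset U\setminus Q$, we get $\int_{Q\cap\xi^\perp}h=0$ and $\int_{U\cap\xi^\perp}h=\int_{\xi^\perp}h\ge b/v$.

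There is one subtlety. The support of $h$ is a closed set disjoint from $Q$ only up to boundary issues, and the section of $Q$ by $\xi^\perp$ could meet $\supp h$ on a set of $(n-1)$-dimensional measure zero. This does not matter, because $h$ is continuous and we are integrating, so the hyperplane integrals are unaffected. The same remark applies to $p$ and $\partial U$.

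Hence
\[
\mu_f(Q\cap\xi^\perp)\le \frac{a}{\sqrt n}+\varepsilon\int_{Q\cap\xi^\perp}\gamma,
\qquad
\mu_f(U\cap\xi^\perp)\ge \lambda\frac{b}{v}.
\]
Choose $\lambda=2av/(b\sqrt n)$, so that $\lambda b/v=2a/\sqrt n$. The Gaussian section integrals are bounded by $(2\pi)^{-(n-1)/2}\le1$ uniformly in $\xi$, so any $\varepsilon<a/\sqrt n$ yields the strict inequality $\mu_f(Q\cap\xi^\perp)<\mu_f(U\cap\xi^\perp)$ for all $\xi$.

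Mass ratio. Because $\supp p\subset Q$ and $p$ is a probability density, $\mu_f(Q)\ge\int p=1$. On the other side, $p$ vanishes on $U$ off a null set, so
\[
\mu_f(U)\le \lambda\int h+\varepsilon\int\gamma=\lambda+\varepsilon .
\]
Take additionally $\varepsilon\le\lambda$. Then
\[
\frac{\mu_f(Q)}{\mu_f(U)}\ge\frac1{2\lambda}=\frac{b}{4a}\,\frac{\sqrt n}{v},
\]
which is \eqref{eq:abstract-mass-ratio} with $c=b/(4a)$.

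The only point needing care is the boundary/null-set issue for the supports. It is handled by noting that $\partial U$ and $\partial Q$ have Lebesgue measure zero (both are convex bodies), and that their intersections with $\xi^\perp$ are null in $\xi^\perp$. Since $h$ and $p$ are bounded and continuous, integrating over $Q\cap\xi^\perp$ versus $\operatorname{int}Q\cap\xi^\perp$ gives the same value. No other obstacle is expected; the hypothesis $v\le\sqrt n$ is not even needed beyond keeping $\lambda\le 2a/b$ bounded.
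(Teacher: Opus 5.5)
Your proof is correct and is essentially the paper's argument: the same density $f=p+\lambda h+\varepsilon\gamma_n$ with the same $\lambda=2av/(b\sqrt n)$, differing only in how $\varepsilon$ is constrained (you get $c=b/(4a)$, the paper gets $b/(3a)$). Two small slips, neither of which affects the argument: the central section integral of $\gamma_n$ is exactly $1/\sqrt{2\pi}$, not $(2\pi)^{-(n-1)/2}$, though your bound by $1$ still holds; and the boundary discussion is unnecessary, since $\supp h\subset U\setminus Q$ already forces $h\equiv0$ on $Q$, and likewise $p\equiv0$ on $U$.
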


\begin{proof}
Set
\begin{equation}\label{eq:abstract-lambda}
       \lambda=\frac{2av}{b\sqrt n}
\end{equation}
and consider first $
       f_0=p+\lambda h.
$

The support separation used below is illustrated in
Figure~\ref{fig:support-separation}. The support assumptions imply
\[
       \mu_{f_0}(Q)=1,
       \qquad
       \mu_{f_0}(U)=\lambda.
\]
Indeed, $p$ is supported in $Q$ and vanishes on $U$, while $h$ is supported
in $U$ and vanishes on $Q$.  Similarly, for every $\xi\in\Sph^{n-1}$,
\[
       \mu_{f_0}(Q\cap\xi^\perp)
       =\int_{\xi^\perp}p
       \leq\frac{a}{\sqrt n},
\]
whereas
\[
       \mu_{f_0}(U\cap\xi^\perp)
       =\lambda\int_{\xi^\perp}h
       \geq\lambda\frac{b}{v}
       =\frac{2a}{\sqrt n}.
\]
Thus the required section comparison already holds with a factor-two margin. The function $f_0$ need not be strictly positive everywhere.  Choose
$\varepsilon>0$ so small that
\begin{equation}\label{eq:abstract-epsilon}
       \varepsilon\leq\frac{\lambda}{2}
       \qquad \mbox{  and  } \qquad
       \frac{\varepsilon}{\sqrt{2\pi}}
       \leq\frac{a}{2\sqrt n}.
\end{equation}
Define $f=f_0+\varepsilon\gauss.$
Then $f$ is even, continuous, integrable, and strictly positive on $\R^n$.
By \eqref{eq:Gaussian-affine-section},
\[
       \int_{\xi^\perp}\gauss=\frac1{\sqrt{2\pi}}.
\]
Since both Gaussian integrals below are nonnegative,
\[
       \int_{U\cap\xi^\perp}\gauss-
       \int_{Q\cap\xi^\perp}\gauss
       \geq-\frac1{\sqrt{2\pi}}.
\]
Therefore
$$
   \mu_f(U\cap\xi^\perp)-\mu_f(Q\cap\xi^\perp)
   \geq
   \frac{2a}{\sqrt n}-\frac{a}{\sqrt n}
   -\frac{\varepsilon}{\sqrt{2\pi}}
   \geq\frac{a}{2\sqrt n}>0.
$$
Finally, $\mu_f(Q)\geq1$, while
\[
       \mu_f(U)
       =\lambda+\varepsilon\int_U\gauss
       \leq\lambda+\varepsilon
       \leq\frac32\lambda.
\]
It follows from \eqref{eq:abstract-lambda} that
\[
       \frac{\mu_f(Q)}{\mu_f(U)}
       \geq\frac{2}{3\lambda}
       =\frac{b}{3a}\frac{\sqrt n}{v},
\]
which proves \eqref{eq:abstract-mass-ratio}.
\end{proof}

\section{{First application: the one-scale lower bound}}
\label{sec:one-application}

\begin{proof}[Proof of Theorem~\ref{thm:one-scale}]
Let $T$ be the {one-scale} body from
\eqref{eq:T-definition}.  By
\eqref{eq:T-inradius} and \eqref{eq:T-volume},
\[
       3\sqrt n\,B_2^n\subset T,
       \qquad
       \abs T^{1/n}\leq C_0\sqrt{\log n}.
\]
Set $  v=\sqrt{\log n}.$ 
For $n\geq3$, one has $1\leq v\leq\sqrt n$.  Choose a fixed absolute constant
$A\geq A_*$ as in Proposition~\ref{prop:annular-density}, and set
\[
       L=Av\sqrt n\,B_2^n
        =A\sqrt{n\log n}\,B_2^n,
\]
and apply Proposition~\ref{prop:one-outer-density} with this value of $A$.
It provides an even continuous probability density $p$ satisfying
\[
       \supp p\subset T\setminus L,
       \qquad
       \int_{\xi^\perp}p\leq\frac{C}{\sqrt n}
       \quad\text{for all }\xi\in\Sph^{n-1}.
\]
Proposition~\ref{prop:annular-density}, applied to $Q=T$ and $U=L$, provides
an even continuous probability density $h$, supported in $L\setminus T$, and
satisfying
\[
       \int_{\xi^\perp}h\geq\frac{c}{\sqrt{\log n}}
       \quad\text{for all }\xi\in\Sph^{n-1}.
\]
All the hypotheses of Proposition~\ref{prop:gluing-principle} are now
satisfied.  It follows that there is an even, continuous, strictly positive,
integrable density $f$ for which
$
       \mu_f(T\cap\xi^\perp)
       <
       \mu_f(L\cap\xi^\perp)$ for all  $\xi\in\Sph^{n-1},
$
while
\[
       \frac{\mu_f(T)}{\mu_f(L)}
       \geq c\sqrt{\frac{n}{\log n}}.
\]
This proves the theorem for all sufficiently large $n$.  For the remaining
finitely many dimensions, decrease the absolute constant $c$ and use
$\calC_n\geq1$, which follows by taking $K=L$ in
Definition~\ref{def:Cn}.
\end{proof}

\section{The Klartag--Livshyts input and support separation}
\label{sec:KL}

To remove the factor $\sqrt{\log n}$ lost in the one-scale
construction, we use the sharp random-rounding construction of Klartag and
Livshyts \cite{KL20}, only through the following consequence of their proof.

\begin{proposition}[Klartag--Livshyts construction]
\label{prop:KL-black-box}
There are absolute constants \(r_0,C_0,c_0,a_0>0\) such that, for every
sufficiently large \(n\), there exist an origin-symmetric convex body
\(T\subset\R^n\) and an even \(C^\infty\) probability density \(g\) on
\(\R^n\) satisfying
\begin{equation}\label{eq:KL-black-box-geometry}
       r_0\sqrt n\,B_2^n\subset T,
       \qquad
       \abs T^{1/n}\leq C_0,
\end{equation}
\begin{equation}\label{eq:KL-black-box-mass}
       \int_Tg(x)\,dx\geq c_0,
\end{equation}
and
\begin{equation}\label{eq:KL-black-box-sections}
       \int_{\xi^\perp}g(x)\,d\lambda_{\xi^\perp}(x)
       \leq\frac{a_0}{\sqrt n}
       \qquad
       \text{for every }\xi\in\Sph^{n-1}.
\end{equation}
\end{proposition}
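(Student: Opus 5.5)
This proposition is a consequence of the proof in \cite{KL20}, not of its stated theorem. So the plan is to reopen the Klartag--Livshyts construction and extract the three properties \eqref{eq:KL-black-box-geometry}--\eqref{eq:KL-black-box-sections} from its internal objects.

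Recall the structure of that construction. It produces an even density $g$ that is a Gaussian mixture over centers arranged at many radial scales, with a random-rounding procedure choosing the centers. It also produces an origin-symmetric convex body $T$, roughly the convex hull of the suitably enlarged centers. Their main theorem asserts
\[
\mu_g(T)\geq c\sqrt n\,\abs{T}^{1/n}\max_{\xi}\mu_g(T\cap\xi^\perp).
\]
The proof reaches this by establishing three facts separately:
\begin{itemize}
\item the bound $\abs T^{1/n}\le C_0$, obtained from Gluskin-type estimates at each scale combined with the multiscale summation;
\item a uniform bound on all central hyperplane integrals of the full mixture, $\int_{\xi^\perp} g\le a_0/\sqrt n$, obtained as in Lemma~\ref{lem:g-sections} from the affine marginal identity \eqref{eq:Gaussian-affine-section} and a net argument for the rounding;
\item a lower bound $\int_T g\ge c_0$, which holds because a fixed fraction of the Gaussian mass near each center lies in $T$ once $T$ is enlarged by a $C\sqrt n$ ball.
\end{itemize}

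The first step is to read off these three facts and normalize $g$ to be a probability density. Normalization only changes constants, because the total mass of the mixture is bounded above and below by absolute constants.

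The second step is to secure the inradius condition $r_0\sqrt n B_2^n\subset T$. If the construction does not already guarantee it, I would replace $T$ by $\conv(T\cup\sqrt n B_1^n\cdot\sqrt n)$, adjoining the points $\pm n e_i$ as in \eqref{eq:P-definition}. This contains $\sqrt nB_2^n$, and by Gluskin's estimate (Theorem~\ref{thm:gluskin}) adding $2n$ vertices changes the volume radius only by a constant factor. Enlarging $T$ also preserves the mass bound \eqref{eq:KL-black-box-mass}.

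The third step is to make $g$ a $C^\infty$ function. Gaussian mixtures are already $C^\infty$, and evenness follows from pairing each center with its negative.

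The main obstacle is the uniform central-section bound \eqref{eq:KL-black-box-sections} for the whole mixture, rather than only for $T\cap\xi^\perp$. If \cite{KL20} bounds only the sections of $T$, then I would bound $\int_{\xi^\perp}g$ directly. Each Gaussian component contributes $\frac1{\sqrt{2\pi}}\varphi(\langle z,\xi\rangle)$, so the task reduces to the rounding estimate controlling the averages of $\varphi(\langle z_j,\xi\rangle)$ over all centers uniformly in $\xi$. That estimate is exactly the core of their random-rounding argument.
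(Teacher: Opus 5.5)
Your route is the paper's. The proposition is read off from Steps~1--3 of the proof of Theorem~1.1 in \cite{KL20}, with $T=C_6K$. Step~1 gives $\abs{C_6K}\leq C_0^n$. Step~2 bounds the hyperplane integrals of the whole density $g$, in fact over all affine hyperplanes, so your anticipated ``main obstacle'' does not arise. Step~3 gives $\int_{C_6K}g\geq 1/2$. The density is $g=\gauss*\nu$ with $\nu$ an even discrete probability measure, so it is automatically even, smooth and already normalized.

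The one step that is wrong is your fallback for the inradius condition. Theorem~\ref{thm:gluskin} does not say that adjoining $2n$ vertices changes the volume radius by a bounded factor, and for a general body this is false. Take $T=[-N,N]^{n-1}\times[-\varepsilon,\varepsilon]$ with $\varepsilon=\frac12(2N)^{1-n}$. Then $\abs T=1$, yet $\conv(T\cup\{\pm ne_n\})$ contains a double cone over $[-N,N]^{n-1}\times\{0\}$ of volume $2(2N)^{n-1}$, so its volume radius is unbounded as $N\to\infty$.

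Nor can you apply Theorem~\ref{thm:gluskin} to the full vertex set. With vertices of norm $n$ it only gives $\abs{\cdot}^{1/n}\leq C\sqrt{\log(1+M/n)}$, which is unbounded once $M/n\to\infty$. This is exactly the $\sqrt{\log n}$ loss of the one-scale construction.

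The fallback is unnecessary, however. The definition of $K$ in \cite{KL20} already includes the points $\pm ne_i$, so $\sqrt nB_2^n\subset nB_1^n\subset K$ and hence $C_6\sqrt nB_2^n\subset T$ with no modification of $T$. This is how the paper obtains the inradius condition, and you should verify it from the definition of $K$ rather than enlarge $T$.
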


\begin{proof}
These properties are contained in
\cite[proof of Theorem~1.1, Steps~1--3]{KL20}.  In the notation used there,
the body is \(T=C_6K\).  The definition of \(K\), which includes the points
\(\{\pm ne_i:1\leq i\leq n\}\), gives
\(\sqrt nB_2^n\subset K\).  Step~1 gives
\(\abs{C_6K}\leq C_0^n\), Step~2 proves
\eqref{eq:KL-black-box-sections} (in fact, uniformly over all affine
hyperplanes), and Step~3 gives
\(\int_{C_6K}g(x)\,dx\geq1/2\).  Their density is the convolution of the
standard Gaussian density with an even discrete probability measure; hence
it is even, smooth, strictly positive, and has integral one.  Renaming the
absolute constants gives the stated formulation.
\end{proof}

The next lemma converts the small central-hyperplane integrals into the
support separation required by the gluing argument.  It is independent of
the particular construction of \(g\).

\begin{lemma}[Small sections force mass away from the origin]
\label{lem:spherical-support-separation}
Let \(n\geq2\), let $q:\R^n\to[0,\infty)$ be a Borel
integrable function, and suppose that
\[
       \int_{\xi^\perp}q(x)\,d\lambda_{\xi^\perp}(x)
       \leq\frac{a}{\sqrt n}
       \qquad
       \text{for every }\xi\in\Sph^{n-1}.
\]
Then, for every \(R>0\),
\begin{equation}\label{eq:mass-in-ball}
       \int_{RB_2^n}q(x)\,dx
       \leq C\,\frac{aR}{n},
\end{equation}
where \(C>0\) is an absolute constant.
\end{lemma}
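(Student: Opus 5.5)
The idea is to average the hyperplane integrals over all directions $\xi\in\Sph^{n-1}$ and identify the average as an integral of $q$ against the weight $\abs{x}^{-1}$. Concretely, the spherical-averaging identity reads
\[
   \int_{\Sph^{n-1}}\int_{\xi^\perp}q(x)\,d\lambda_{\xi^\perp}(x)\,d\sigma_{n-1}(\xi)
   =\sigma_{n-2}(\Sph^{n-2})\int_{\R^n}\frac{q(x)}{\abs x}\,dx.
\]
This holds for every nonnegative Borel $q$, with both sides possibly infinite; by Tonelli there are no integrability issues.

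To verify the identity, I would write both sides in polar coordinates $x=s\theta$. On the right, $dx=s^{n-1}\,ds\,d\sigma_{n-1}(\theta)$. On the left, for a fixed $\xi$ the hyperplane integral in polar coordinates within $\xi^\perp$ is $\int_0^\infty\int_{\Sph^{n-1}\cap\xi^\perp}q(s\theta)s^{n-2}\,d\sigma(\theta)\,ds$. Integrating over $\xi$ leaves us to check that
\[
   \int_{\Sph^{n-1}}\int_{\Sph^{n-1}\cap\xi^\perp}F(\theta)\,d\sigma(\theta)\,d\sigma(\xi)
   =\sigma_{n-2}(\Sph^{n-2})\int_{\Sph^{n-1}}F.
\]
This follows from rotation invariance: the left side is a rotation-invariant functional of $F$, hence a multiple of $\int F$, and testing with $F\equiv1$ gives the constant $\sigma_{n-1}(\Sph^{n-1})\sigma_{n-2}(\Sph^{n-2})/\sigma_{n-1}(\Sph^{n-1})$. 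I need to double-check that normalization, since the measure on the left is unnormalized $\sigma_{n-1}$. Alternatively, one can take $q$ radial-free and verify the identity on indicator functions of thin spherical shells. Either route is routine, and the power $s^{n-2}$ versus $s^{n-1}$ is exactly what produces the weight $1/\abs x$.

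With the identity in hand, the hypothesis bounds the left side by $\sigma_{n-1}(\Sph^{n-1})\,a/\sqrt n$. Hence
\[
   \int_{\R^n}\frac{q(x)}{\abs x}\,dx
   \leq \frac{\sigma_{n-1}(\Sph^{n-1})}{\sigma_{n-2}(\Sph^{n-2})}\cdot\frac{a}{\sqrt n}
   \leq \frac{C}{\sqrt n}\cdot\frac{a}{\sqrt n}=\frac{Ca}{n},
\]
where the last inequality uses Lemma~\ref{lem:ball-estimates}. (For small $n$, such as $n=2$, the ratio is still bounded by an absolute constant times $n^{-1/2}$, so the final $C$ stays absolute.)

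Finally, on $RB_2^n$ we have $1\leq R/\abs x$, so
\[
   \int_{RB_2^n}q\leq R\int_{\R^n}\frac{q(x)}{\abs x}\,dx\leq \frac{CaR}{n},
\]
which is \eqref{eq:mass-in-ball}.

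The only real step is the averaging identity together with its constant; I expect no obstacle beyond bookkeeping of the sphere normalizations.
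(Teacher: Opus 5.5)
Your proposal is correct and follows essentially the same route as the paper: the spherical-averaging identity for the central Radon transform, with $\sigma_{n-2}(\Sph^{n-2})/\sigma_{n-1}(\Sph^{n-1})\geq c\sqrt n$ from Lemma~\ref{lem:ball-estimates}, followed by the bound $1/\abs x\geq 1/R$ on $RB_2^n$. The normalization you wanted to double-check is right: testing with $F\equiv1$ gives exactly the constant $\sigma_{n-2}(\Sph^{n-2})$ in your unnormalized identity, which is the paper's identity multiplied by $\sigma_{n-1}(\Sph^{n-1})$.
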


\begin{proof}
Let \(\bar\sigma_{n-1}\) be the normalized surface measure on
\(\Sph^{n-1}\).  Spherical averaging of the central Radon transform gives
\begin{equation}\label{eq:averaged-Radon}
 \int_{\Sph^{n-1}}
 \left(\int_{\xi^\perp}q(x)\,d\lambda_{\xi^\perp}(x)\right)
 d\bar\sigma_{n-1}(\xi)
 =
 \frac{\sigma_{n-2}(\Sph^{n-2})}
            {\sigma_{n-1}(\Sph^{n-1})}\int_{\R^n}\frac{q(x)}{\abs x}\,dx.
\end{equation}
For completeness, \eqref{eq:averaged-Radon} follows by writing the
hyperplane integral in polar coordinates inside \(\xi^\perp\), applying
Fubini on the incidence set
$
       \{(\xi,\theta)\in\Sph^{n-1}\times\Sph^{n-1}:
          \ip{\xi}{\theta}=0\},
$
and using rotational invariance.  The constant is obtained by applying the
identity to a radial function. 
Since \(q\geq0\) and \(\abs{x}\leq R\) for \(x\in RB_2^n\),
\[
   \int_{\R^n}\frac{q(x)}{\abs{x}}\,dx
   \geq
   \int_{RB_2^n}\frac{q(x)}{\abs{x}}\,dx
   \geq
   \frac1R\int_{RB_2^n}q(x)\,dx.
\]
Therefore, by  Lemma \ref{lem:ball-estimates},
\begin{equation}\label{eq:ball-mass-lower}
   \frac{\sigma_{n-2}(\Sph^{n-2})}
        {\sigma_{n-1}(\Sph^{n-1})}\int_{\R^n}\frac{q(x)}{\abs{x}}\,dx
   \geq
   \frac{c\sqrt n}{R}\int_{RB_2^n}q(x)\,dx.
\end{equation}
On the other hand, the spherical-averaging identity
\eqref{eq:averaged-Radon} and the assumed section bound imply
\[
    \frac{\sigma_{n-2}(\Sph^{n-2})}
        {\sigma_{n-1}(\Sph^{n-1})}\int_{\R^n}\frac{q(x)}{\abs{x}}\,dx
   =
   \frac1{\sigma_{n-1}(\Sph^{n-1})}
   \int_{\Sph^{n-1}}
      \left(\int_{\xi^\perp}q(x)\,
      d\lambda_{\xi^\perp}(x)\right)
   d\sigma_{n-1}(\xi) \leq \frac{a}{\sqrt n}.
\]
Combining this with \eqref{eq:ball-mass-lower}, we obtain  \eqref{eq:mass-in-ball}.
\end{proof}

We now use continuous cutoffs to turn the Klartag--Livshyts density into a
probability density with genuinely separated support.

\begin{proposition}[The sharp outer density]
\label{prop:KL-outer-density}
Fix \(A\geq1\).  For all sufficiently large \(n\), the body \(T\) in
Proposition~\ref{prop:KL-black-box} may be chosen so that, with $       L=A\sqrt n\,B_2^n,$ 
there exists an even continuous probability density \(p\) on \(\R^n\)
such that  $\supp p\subset T\setminus L$ and 
\begin{equation}
       \sup_{\xi\in\Sph^{n-1}}
\int_{\xi^\perp}p(x)\,d\lambda_{\xi^\perp}(x)
       \leq\frac{C}{\sqrt n}.
       \label{eq:KL-p-sections}
\end{equation}
Here \(C>0\) is an absolute constant, once \(A\) is fixed.
\end{proposition}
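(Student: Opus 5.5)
Take $T$ and $g$ from Proposition~\ref{prop:KL-black-box}. The idea is to keep the part of $g$ that lies in $T$ but outside a slightly larger ball, multiply it by continuous cutoffs, and renormalize. The cutoffs have to handle two things: the support must avoid $L$ (a closed set), and $\supp p$ must stay inside $T$ even though $\1_T$ is not continuous.

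For the boundary of $T$ we shrink slightly. Let $\tau(s)$ equal $1$ on $[0,1-1/n]$, decrease linearly to $0$ at $s=1$, and vanish afterwards. Then $x\mapsto\tau(\norm{x}_T)$ is even and continuous, supported in $T$, and equals $1$ on $(1-1/n)T$. The mass of $g$ on $(1-1/n)T$ may be smaller than $\int_T g\geq c_0$, so we rescale instead. Put $T'=(1-1/n)^{-1}T$. This body still satisfies \eqref{eq:KL-black-box-geometry} with $C_0$ replaced by $2C_0$. It contains $T$, so $\int_{T'}g\ge c_0$, and the cutoff $\tau(\norm{x}_{T'})$ equals $1$ on $T$. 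This is where the phrase ``the body $T$ may be chosen'' comes in: we rename $T'$ as $T$.

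For the ball, let $\rho(s)$ vanish on $[0,A\sqrt n]$, increase linearly to $1$ on $[A\sqrt n,2A\sqrt n]$, and equal $1$ after that. Define
\[
p_0(x)=g(x)\,\tau(\norm{x}_{T'})\,\rho(\abs x).
\]
This function is even, continuous and nonnegative. Its support lies in $T'$ and avoids the open ball of radius $A\sqrt n$. To get $\supp p\subset T\setminus L$ with $L$ closed, shift the breakpoint of $\rho$ from $A\sqrt n$ to $(A+1)\sqrt n$. Then $\supp p_0$ sits in $\abs x\geq(A+1)\sqrt n$ and is disjoint from $L$.

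The key step is a lower bound on the mass of $p_0$. Since $p_0\geq g$ on $T\setminus 2(A+1)\sqrt n B_2^n$, we have
\[
\int p_0\;\geq\;\int_T g-\int_{2(A+1)\sqrt nB_2^n}g\;\geq\; c_0-C\,\frac{a_0\cdot 2(A+1)\sqrt n}{n}.
\]
The last inequality is Lemma~\ref{lem:spherical-support-separation} with $q=g$ and $R=2(A+1)\sqrt n$. The error term is $O(A/\sqrt n)$, so $\int p_0\geq c_0/2$ once $n$ is large enough depending on $A$. I expect this to be the main obstacle, and Lemma~\ref{lem:spherical-support-separation} is exactly the tool for it. It shows that small central sections leave only $O(n^{-1/2})$ of the mass inside a ball of radius $O(\sqrt n)$, while $T$ carries a constant amount of mass.

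Finally set $p=p_0/\int p_0$. This is an even continuous probability density with $\supp p\subset T\setminus L$, where $T$ now denotes the rescaled body. Since $0\le p_0\le g$, for every $\xi$,
\[
\int_{\xi^\perp}p\;\leq\;\frac{2}{c_0}\int_{\xi^\perp}g\;\leq\;\frac{2a_0}{c_0\sqrt n}
\]
by \eqref{eq:KL-black-box-sections}. This gives \eqref{eq:KL-p-sections} with $C=2a_0/c_0$. The dependence on $A$ enters only through how large $n$ must be.
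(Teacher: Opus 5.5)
Your proof is correct and follows the paper's route. Lemma~\ref{lem:spherical-support-separation} shows that only $O(A/\sqrt n)$ of the mass of $g$ lies in a ball of radius $O(A\sqrt n)$; you then multiply by a radial cutoff and a cutoff near $\partial T$, renormalize, and get the section bound from the pointwise comparison with $g$. The one difference is at $\partial T$: the paper keeps $T$ and uses a cutoff supported in $(1-\delta)T$, with $\delta$ chosen by dominated convergence since $\abs{\partial T}=0$, whereas you enlarge $T$ by $(1-1/n)^{-1}$. Your version is equally valid but turns $C_0$ into $2C_0$, so the proof of Theorem~\ref{thm:main} must take $A\geq A_*(r_0,2C_0)$, or equivalently Proposition~\ref{prop:KL-black-box} must be restated with $2C_0$.
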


\begin{proof}
Choose \(T\) and \(g\) as in Proposition~\ref{prop:KL-black-box}, and set
\[
       q=g\1_T.
\]
By \eqref{eq:KL-black-box-sections}, \(q\) satisfies the hypothesis of
Lemma~\ref{lem:spherical-support-separation} with \(a=a_0\).
Applying \eqref{eq:mass-in-ball} with \(R=3A\sqrt n\), we obtain
\[
       \int_{T\cap3A\sqrt nB_2^n}g(x)\,dx
       \leq\frac{C_A}{\sqrt n}.
\]
Together with \eqref{eq:KL-black-box-mass}, this implies, for all
sufficiently large \(n\), that
\begin{equation}\label{eq:KL-outer-mass}
       \int_{T\setminus3A\sqrt nB_2^n}g(x)\,dx
       \geq\frac{c_0}{2}.
\end{equation}

Let \(\eta:\R^n\to[0,1]\) be the even radial function that equals zero on
\(2A\sqrt nB_2^n\), equals one outside \(3A\sqrt nB_2^n\), and is linear
in \(\abs x\) on the intervening annulus.  For \(0<\delta<1/3\), define
\[
 \omega_\delta(x)=
 \begin{cases}
  1,&\norm{x}_T\leq1-2\delta,\\
  \displaystyle\frac{1-\delta-\norm{x}_T}{\delta},
       &1-2\delta\leq\norm{x}_T\leq1-\delta,\\
  0,&\norm{x}_T\geq1-\delta.
 \end{cases}
\]
Then \(\omega_\delta\) is even and continuous, and its support is contained
in \((1-\delta)T\).  Since the boundary of \(T\) has Lebesgue measure zero,
dominated convergence and \eqref{eq:KL-outer-mass} show that, for a
sufficiently small \(\delta>0\),
\[
       m_\delta
       :=
       \int_{\R^n}g(x)\omega_\delta(x)\eta(x)\,dx
       \geq\frac{c_0}{4}.
\]
Define
\[
       p(x)=\frac{g(x)\omega_\delta(x)\eta(x)}{m_\delta}.
\]
This is an even continuous probability density.  Its two cutoffs give
\[
       \supp p
       \subset
       (1-\delta)T\setminus\operatorname{int}
       \left(2A\sqrt nB_2^n\right)
       \subset T\setminus L.
\]
Moreover,  $0\leq p\leq\frac{4}{c_0}g\1_T.$ 
Integrating this inequality over $\xi^\perp$ and using
\eqref{eq:KL-black-box-sections} proves
\eqref{eq:KL-p-sections}.
\end{proof}

\section{Second application: the sharp lower bound}
\label{sec:sharp-application}

\begin{proof}[Proof of Theorem~\ref{thm:main}]
Let \(r_0,C_0\) be the constants in
Proposition~\ref{prop:KL-black-box}.  Choose a fixed absolute constant
$A\geq\max\{1,A_*(r_0,C_0)\}$ as in
Proposition~\ref{prop:annular-density}, choose
$T$ as in Proposition~\ref{prop:KL-outer-density}, and set
$
       L=A\sqrt n\,B_2^n.
$ 
By \eqref{eq:KL-black-box-geometry}, the assumptions
\eqref{eq:abstract-Q-geometry} hold with $Q=T$,  $U=L$ and  $v=1$. 
Proposition~\ref{prop:annular-density} gives an even continuous probability
density \(h\), supported in \(L\setminus T\), and satisfying
\[
       \int_{\xi^\perp}h\geq b
       \qquad\text{for every }\xi\in\Sph^{n-1},
\]
where \(b>0\) is absolute.  Proposition~\ref{prop:KL-outer-density} gives an
even continuous probability density \(p\) such that
\[
       \supp p\subset T\setminus L,
       \qquad
       \int_{\xi^\perp}p\leq\frac{a}{\sqrt n}
       \qquad\text{for every }\xi\in\Sph^{n-1},
\]
where \(a>0\) is absolute.

Apply Proposition~\ref{prop:gluing-principle} with \(v=1\).  We obtain an
even, continuous, strictly positive, integrable density $f$ such that
$
       \mu_f(T\cap\xi^\perp)
       <
       \mu_f(L\cap\xi^\perp)$ for every $\xi\in\Sph^{n-1}$ 
and
\[
       \frac{\mu_f(T)}{\mu_f(L)}
       \geq c\sqrt n.
\]
This proves the lower estimate in Theorem~\ref{thm:main} for all sufficiently
large \(n\).  For the remaining finitely many dimensions, decrease \(c\) and
use \(\calC_n\geq1\).  The upper estimate is
\eqref{eq:known-upper}.
\end{proof}
\begin{remark}\label{rem:properties}
The density in the sharp example cannot be log-concave.  Indeed, every
log-concave density is, in particular, $(-1/n)$-concave.  Therefore
 \cite[Theorem~4]{KZ15} (see also \cite{WU20} and a more general inequality in \cite{Tz}) bounds the
isomorphic Busemann--Petty constant for
even log-concave densities, up to an absolute factor, by the maximal
isotropic constant in dimension $n$.  The dimension-free estimate for
isotropic constants proved in \cite[Theorem~1.2]{KL25} makes this restricted
isomorphic constant absolute.  This is incompatible, for large $n$, with
the ratio of order $\sqrt n$ obtained here. We also note that, the two principal components of $f$ have deliberately separated supports.
The small Gaussian term in the proof of
Proposition~\ref{prop:gluing-principle} is used only to enforce strict
positivity.
\end{remark}

\section*{Acknowledgments}

The authors used OpenAI's ChatGPT 5.6 Sol to assist in checking calculations,
improving the exposition, and preparing the manuscript.

\address{Alexander Koldobsky, Department of Mathematics,
University of Missouri, Columbia, MO 65211, USA}
\email{koldobskiya@missouri.edu}

\address{Artem Zvavitch, Department of Mathematical Sciences,
Kent State University, Kent, OH 44242, USA}
\email{zvavitch@math.kent.edu}


\small
\begin{thebibliography}{99}

\bibitem{AGM21}
S.~Artstein-Avidan, A.~Giannopoulos, and V.~D. Milman,
\emph{Asymptotic Geometric Analysis. Part II},
Mathematical Surveys and Monographs, vol.~261,
American Mathematical Society, Providence, RI, 2021.
\href{https://doi.org/10.1090/surv/261}
{doi:10.1090/surv/261}.


\bibitem{Be26}
P.~Bizeul,
\emph{The slicing conjecture via small ball estimates},
\href{https://arxiv.org/abs/2501.06854}{arXiv:2501.06854}, 2025.

\bibitem{BKK18}
S.~G. Bobkov, B.~Klartag, and A.~Koldobsky,
\emph{Estimates for moments of general measures on convex bodies},
Proc. Amer. Math. Soc. \textbf{146} (2018), no.~11, 4879--4888.
\href{https://doi.org/10.1090/proc/14119}{doi:10.1090/proc/14119};
\href{https://arxiv.org/abs/1712.05949}{arXiv:1712.05949}.

\bibitem{Bourgain86}
J.~Bourgain,
\emph{On high-dimensional maximal functions associated to convex bodies},
Amer. J. Math. \textbf{108} (1986), no.~6, 1467--1476.
\href{https://doi.org/10.2307/2374532}{doi:10.2307/2374532}.

\bibitem{BP56}
H.~Busemann and C.~M. Petty,
\emph{Problems on convex bodies},
Math. Scand. \textbf{4} (1956), 88--94.
\href{https://doi.org/10.7146/math.scand.a-10457}
{doi:10.7146/math.scand.a-10457}.

\bibitem{CGL}
G.~Chasapis, A.~Giannopoulos and D.~Liakopoulos,
\emph{Estimates for measures of lower dimensional sections of convex bodies},
Adv. Math. \textbf{306} (2017), 880--904.
\href{https://doi.org/10.1016/j.aim.2016.10.035}
{doi:10.1016/j.aim.2016.10.035}

\bibitem{Chen21}
Y.~Chen,
\emph{An almost constant lower bound of the isoperimetric coefficient in the
KLS conjecture},
Geom. Funct. Anal. \textbf{31} (2021), 34--61.
\href{https://doi.org/10.1007/s00039-021-00556-4}
{doi:10.1007/s00039-021-00556-4};
\href{https://arxiv.org/abs/2011.13661}{arXiv:2011.13661}.

\bibitem{GHS26}
D.~Galicer, J.~Haddad, and J.~Singer,
\emph{Extensions of the Busemann--Petty problem for arbitrary measures},
\href{https://arxiv.org/abs/2512.13555}{arXiv:2512.13555v2}, 2026.

\bibitem{Gardner06}
R.~J. Gardner,
\emph{Geometric Tomography}, second ed.,
Encyclopedia of Mathematics and its Applications, vol.~58,
Cambridge University Press, Cambridge, 2006.
\href{https://doi.org/10.1017/CBO9781107341029}
{doi:10.1017/CBO9781107341029}.

\bibitem{GKS99}
R.~J. Gardner, A.~Koldobsky, and T.~Schlumprecht,
\emph{An analytic solution to the Busemann--Petty problem on sections of
convex bodies},
Ann. of Math. (2) \textbf{149} (1999), no.~2, 691--703.
\href{https://doi.org/10.2307/120978}{doi:10.2307/120978};
\href{https://arxiv.org/abs/math/9903200}{arXiv:math/9903200}.

\bibitem{Glu89}
E.~D. Gluskin,
\emph{Extremal properties of orthogonal parallelepipeds and their applications
to the geometry of Banach spaces},
Math. USSR-Sb. \textbf{64} (1989), no.~1, 85--96.
\href{https://doi.org/10.1070/SM1989v064n01ABEH003295}
{doi:10.1070/SM1989v064n01ABEH003295}.

\bibitem{Guan24}
Q.~Guan,
\emph{A note on Bourgain's slicing problem},
\href{https://arxiv.org/abs/2412.09075}{arXiv:2412.09075}, 2024.

\bibitem{KK18}
B.~Klartag and A.~Koldobsky,
\emph{An example related to the slicing inequality for general measures},
J. Funct. Anal. \textbf{274} (2018), no.~7, 2089--2112.
\href{https://doi.org/10.1016/j.jfa.2017.08.025}
{doi:10.1016/j.jfa.2017.08.025};
\href{https://arxiv.org/abs/1706.01132}{arXiv:1706.01132}.

\bibitem{KL20}
B.~Klartag and G.~V. Livshyts,
\emph{The lower bound for Koldobsky's slicing inequality via random rounding},
in \emph{Geometric Aspects of Functional Analysis. Israel Seminar
(GAFA) 2017--2019, Vol.~II}, Lecture Notes in Math. \textbf{2266},
Springer, Cham, 2020, pp.~43--63.
\href{https://doi.org/10.1007/978-3-030-46762-3_2}
{doi:10.1007/978-3-030-46762-3\_2}.
Corrected version:
\href{https://arxiv.org/abs/1810.06189}
{arXiv:1810.06189v4} (July 18, 2023).

\bibitem{KL22}
B.~Klartag and J.~Lehec,
\emph{Bourgain's slicing problem and KLS isoperimetry up to polylog},
Geom. Funct. Anal. \textbf{32} (2022), no.~5, 1134--1159.
\href{https://doi.org/10.1007/s00039-022-00611-4}
{doi:10.1007/s00039-022-00611-4};
\href{https://arxiv.org/abs/2203.15551}{arXiv:2203.15551}.

\bibitem{KL25}
B.~Klartag and J.~Lehec,
\emph{Affirmative resolution of Bourgain's slicing problem using Guan's
bound},
Geom. Funct. Anal. \textbf{35} (2025), no.~4, 1147--1168.
\href{https://doi.org/10.1007/s00039-025-00718-w}
{doi:10.1007/s00039-025-00718-w};
\href{https://arxiv.org/abs/2412.15044}{arXiv:2412.15044}.

\bibitem{Koldobsky05}
A.~Koldobsky,
\emph{Fourier Analysis in Convex Geometry},
Mathematical Surveys and Monographs, vol.~116,
American Mathematical Society, Providence, RI, 2005.
\href{https://doi.org/10.1090/surv/116}{doi:10.1090/surv/116}.

\bibitem{Koldobsky14}
A.~Koldobsky,
\emph{A $\sqrt n$ estimate for measures of hyperplane sections of convex
bodies},
Adv. Math. \textbf{254} (2014), 33--40.
\href{https://doi.org/10.1016/j.aim.2013.12.029}
{doi:10.1016/j.aim.2013.12.029};
\href{https://arxiv.org/abs/1309.5271}{arXiv:1309.5271}.

\bibitem{Koldobsky15}
A.~Koldobsky,
\emph{Slicing inequalities for measures of convex bodies},
Adv. Math. \textbf{283} (2015), 473--488.
\href{https://doi.org/10.1016/j.aim.2015.07.019}
{doi:10.1016/j.aim.2015.07.019};
\href{https://arxiv.org/abs/1412.8550}{arXiv:1412.8550}.

\bibitem{KPZ22}
A.~Koldobsky, G.~Paouris, and A.~Zvavitch,
\emph{Measure comparison and distance inequalities for convex bodies},
Indiana Univ. Math. J. \textbf{71} (2022), no.~1, 391--407.
\href{https://doi.org/10.1512/iumj.2022.71.8838}
{doi:10.1512/iumj.2022.71.8838};
\href{https://arxiv.org/abs/1912.00880}{arXiv:1912.00880}.

\bibitem{KZ15}
A.~Koldobsky and A.~Zvavitch,
\emph{An isomorphic version of the Busemann--Petty problem for arbitrary
measures},
Geom. Dedicata \textbf{174} (2015), 261--277.
\href{https://doi.org/10.1007/s10711-014-0016-x}
{doi:10.1007/s10711-014-0016-x};
\href{https://arxiv.org/abs/1405.0567}{arXiv:1405.0567}.

\bibitem{MP89}
V.~D. Milman and A.~Pajor,
\emph{Isotropic position and inertia ellipsoids and zonoids of the unit ball
of a normed $n$-dimensional space},
in \emph{Geometric Aspects of Functional Analysis (1987--88)},
Lecture Notes in Math. \textbf{1376},
Springer, Berlin, 1989, pp.~64--104.
\href{https://doi.org/10.1007/BFb0090049}{doi:10.1007/BFb0090049}.

\bibitem{NIST}
F.~W.~J. Olver, D.~W. Lozier, R.~F. Boisvert, and C.~W. Clark (eds.),
\emph{NIST Handbook of Mathematical Functions},
Cambridge University Press, New York, 2010.
\href{https://dlmf.nist.gov/5.6}
{DLMF Section 5.6: Gamma-function inequalities}.

\bibitem{Pisier89}
G.~Pisier,
\emph{The Volume of Convex Bodies and Banach Space Geometry},
Cambridge Tracts in Mathematics, vol.~94,
Cambridge University Press, Cambridge, 1989.
\href{https://doi.org/10.1017/CBO9780511662454}
{doi:10.1017/CBO9780511662454}.

\bibitem{Tz}
N.~Tziotziou,
\emph{Inequalities for sections and projections of log-concave functions},
J. Geom. Analysis \textbf{36} (2026), Paper No. 145.
\href{https://doi.org/10.1007/s12220-026-02388-y}
{doi.org/10.1007/s12220-026-02388-y}

\bibitem{WU20}
D.~Wu,
\emph{The isomorphic Busemann--Petty problem for $s$-concave measures},
Geom. Dedicata \textbf{204} (2020), 131--148.
\href{https://doi.org/10.1007/s10711-019-00446-0}
{doi:10.1007/s10711-019-00446-0}.


\bibitem{Zhang99}
G.~Zhang,
\emph{A positive solution to the Busemann--Petty problem in $\R^4$},
Ann. of Math. (2) \textbf{149} (1999), no.~2, 535--543.
\href{https://doi.org/10.2307/120974}{doi:10.2307/120974};
\href{https://arxiv.org/abs/math/9903205}{arXiv:math/9903205}.

\bibitem{Zvavitch05}
A.~Zvavitch,
\emph{The Busemann--Petty problem for arbitrary measures},
Math. Ann. \textbf{331} (2005), no.~4, 867--887.
\href{https://doi.org/10.1007/s00208-004-0611-5}
{doi:10.1007/s00208-004-0611-5};
\href{https://arxiv.org/abs/math/0406406}{arXiv:math/0406406}.

\end{thebibliography}
\end{document}